\newtheorem*{thm}{Theorem}
\newtheorem{theorem}{Theorem}[section]
\newtheorem{lemma}[theorem]{Lemma}
\newtheorem{cor}[theorem]{Corollary}
\theoremstyle{definition}
\newtheorem{definition}[theorem]{Definition}
\newtheorem{example}[theorem]{Example}
\theoremstyle{remark}
\newtheorem{remark}[theorem]{Remark}
\numberwithin{equation}{section}
\newcommand{\bB}[1]{\ensuremath{\mathbb{#1}}}
\newcommand{\mC}[1]{{\ensuremath{\kern 0.1pt\mathcal{#1}}}}
\newcommand{\To}{\ensuremath{\Rightarrow}}
\DeclareMathOperator{\Hom}{Hom}
\DeclareMathOperator{\id}{id}
\DeclareMathOperator{\Qcoh}{Qcoh}
\DeclareMathOperator{\act}{act}
\DeclareMathOperator{\coact}{coact}
\DeclareMathOperator{\op}{op}
\DeclareMathOperator{\Ob}{Ob}
\DeclareMathOperator{\tens}{\otimes}
\DeclareMathOperator{\Tens}{\odot}
\DeclareMathOperator{\Ch}{\bf Ch}
\DeclareMathOperator{\K}{\bf K}
\DeclareMathOperator{\D}{\bf D}
\DeclareMathOperator{\fp}{fp}
\DeclareMathOperator{\Gr}{Gr}
\DeclareMathOperator{\Mod}{Mod}
\DeclareMathOperator{\modd}{mod}
\DeclareMathOperator{\ev}{ev}
\DeclareMathOperator{\pr}{pr}
\DeclareMathOperator{\Ab}{Ab}
\DeclareMathOperator{\swap}{sw}
\newcommand{\lp}{\varinjlim}
\newcommand{\cc}{\mathcal}
\newcommand{\dtens}[4]{d^{#1}_{#2}\tens\id^{#3}_{#4} +(-1)^{#2}
\id^{#1}_{#2}\tens d^{#3}_{#4}}
\newcommand{\dHom}[4]{[\id^{#1}_{#2}\!,d^{#3}_{#2 + #4}]-(-1)^{#4}[d^{#1}_{#2}\!,\id^{#3}_{#2 + #4 - 1}]}
\newenvironment{diagram*}{\begin{equation*}\begin{tikzcd}}{\end{tikzcd}\end{equation*}}
\begin{document}

\title{Derived categories for Grothendieck categories of enriched functors}

\author{Grigory Garkusha}
\address{Department of Mathematics, Swansea University, Fabian Way, Swansea SA1 8EN, United Kingdom}
\curraddr{}
\email{g.garkusha@swansea.ac.uk}
\thanks{}

\author{Darren Jones}
\address{Department of Mathematics, Swansea University, Fabian Way, Swansea SA1 8EN, United Kingdom}
\curraddr{}
\email{darrenalexanderjones@gmail.com}
\thanks{}

\dedicatory{Dedicated to Mike Prest on the occasion of his 65th birthday}

\subjclass[2010]{Primary 13D09; Secondary 18D10, 18D20}

\date{}

\keywords{Grothendieck categories of enriched functors, derived categories, compactly generated triangulated categories}

\begin{abstract}
The derived category $\D[\mC C,\mC V]$ of the Grothendieck
category of enriched functors $[\mC C,\mC V]$, where \mC V is a
closed symmetric monoidal Grothendieck category and \mC C is a small
\mC V-category, is studied. We prove that if the derived
category $\D(\mC V)$ of $\mC V$ is a compactly generated
triangulated category with certain reasonable assumptions on compact generators 
or $\mathbf K$-injective resolutions, then the
derived category $\D[\mC C,\mC V]$ is also compactly generated
triangulated. Moreover, an
explicit description of these generators is given.
\end{abstract}

\maketitle

\section{Introduction}

Enriched categories generalize the idea of a
category by replacing $\Hom$-sets with objects from a 
monoidal category. In practice the Hom-sets often have additional structure
that should be respected, e.g., that of being a topological space of
morphisms, or a chain complex of morphisms.
They have plenty of uses and applications. For example,
Bondal--Kapranov~\cite{BK} construct enrichments of some
triangulated categories over chain complexes (``DG-categories") to
study exceptional collections of coherent sheaves on projective
varieties. Today, DG-categories have become an important tool in many
branches of algebraic geometry, non-commutative algebraic geometry,
representation theory, and mathematical physics (see a survey by
Keller~\cite{KellerICM}). There are also applications in motivic homotopy theory.
For example, Dundas--R\"ondigs--{\O}stv{\ae}r~\cite{DRO,DRO1} use
enriched category theory to give a model for the Morel--Voevodsky category
$SH(k)$. In~\cite{GP,GPa2,GPa1} enrichments of
smooth algebraic varieties over symmetric spectra have been used in order to
develop the theory of ``$K$-motives" and solve a problem for the
motivic spectral sequence.

In~\cite{AG} the category of enriched functors $[\cc C,\cc V]$ was studied,
where $\cc V$ is a closed symmetric monoidal Gro\-then\-dieck category
and $\cc C$ is a small category enriched over $\cc V$. It was shown that 
$[\cc C,\cc V]$ is a Grothendieck $\cc V$-category with a set of generators 
$\{\mathcal{V}(c,-) \oslash g_i\mid c\in\Ob\cc C,i\in I\}$, where $\{g_i\}_I$ is a set of generators
of $\cc V$. The category $[\cc C,\cc V]$ is called in~\cite{AG} the {\it Grothendieck category
of enriched functors}. Basic examples are given by categories of additive functors $(\cc B,\Ab)$ or
DG-modules $\Mod\cc A$ over a DG-category $\cc A$.
An advantage of this result is
that we can recover some well-known theorems for Grothendieck
categories in the case where $\cc V=\Ab$. Another advantage is that $\cc
V$ can also contain some rich homological or homotopical
information, which is extended to the category of enriched functors
$[\cc C,\cc V]$. This homotopical information is of great
utility to study the derived category $\mathbf D(\cc C_R)$ of the category of
generalized modules $\cc C_R=(\modd R,\Ab)$ over a commutative ring $R$. It was proven
in~\cite{AG} that $\mathbf D(\cc C_R)$ is essentially the same as a unital 
algebraic stable homotopy category in the sense of Hovey--Palmieri--Strickland~\cite{HPS} 
except that the compact objects do not have to be strongly dualizable, but must have a duality.
Moreover, this duality is nothing but the classical Auslander--Gruson--Jensen Duality
extended to compact objects of $\mathbf D(\cc C_R)$ (see~\cite{AG} for details).

In this paper we investigate the problem of when the derived category $\D[\cc C,\cc V]$ 
of the Gro\-then\-dieck category $[\cc C,\cc V]$ is compactly generated
triangulated and give an explicit description of the compact generators. 
The importance of this problem is that 
the general localization theory of compactly generated triangulated 
categories becomes available for $\D[\cc C,\cc V]$ in that case.
Namely, we prove the following result (see Theorem~\ref{mainthm}):

\begin{thm}
Let ($\mC V, \tens,e$) be a closed symmetric monoidal Grothendieck category
such that the derived category of \mC V  is a compactly
generated triangulated category with compact generators $\{P_j\}_{j\in J}$. Further,
suppose we have a small \mC V-category \mC C and that any one of the following
conditions is satisfied:
\begin{itemize}
  \item[1.] each $P_j$ is $\K$-projective, in the sense of Spaltenstein
  \cite{Spal};
  \item[2.] for every $\K$-injective $Y\in \Ch [\mC C,\mC V]$ and every $c\in
  \mC C$, the complex $Y(c)\in \Ch(\mC V)$ is $\K$-injective;
  \item[3.] $\Ch(\mC V)$ has a model structure, with quasi-isomorphisms
  being weak equivalences, such that for every injective fibrant
  complex $Y\in \Ch [\mC C,\mC V]$ the complex $Y(c)$ is fibrant in $\Ch(\mC V)$.
\end{itemize}
Then $\D[\mC C,\mC V]$ is a compactly generated triangulated category with
compact generators $\{\cc V_{\cc C}(c,-)\oslash Q_j\mid c\in \mC C, j\in J\}$ where, if we
assume either (1) or (2), $Q_j = P_j$ or if we assume (3) then $Q_j = P^c_j$
a cofibrant replacement of $P_j$.
\end{thm}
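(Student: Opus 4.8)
The plan is to reduce the entire statement to a single natural isomorphism at the level of derived categories, namely the \emph{derived adjunction}
\begin{equation*}
\Hom_{\D[\mC C,\mC V]}(\mC V_{\mC C}(c,-)\oslash Q_j, X)\;\cong\;\Hom_{\D(\mC V)}(Q_j, X(c)),
\end{equation*}
natural in $X\in\D[\mC C,\mC V]$. The starting point is the enriched co-Yoneda isomorphism $[\mC C,\mC V](\mC V_{\mC C}(c,-)\oslash A, F)\cong\mC V(A,F(c))$, which exhibits the evaluation functor $\ev_c\colon[\mC C,\mC V]\to\mC V$, $F\mapsto F(c)$, as right adjoint to $A\mapsto\mC V_{\mC C}(c,-)\oslash A$. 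Since limits, colimits and homology in a functor category are computed objectwise, $\ev_c$ is exact, preserves arbitrary coproducts, and sends quasi-isomorphisms to quasi-isomorphisms and acyclic complexes to acyclic complexes; in particular a complex $X\in\Ch[\mC C,\mC V]$ is acyclic if and only if $X(c)$ is acyclic for every $c$. Both functors are additive, so the adjunction lifts degreewise to an adjoint pair between $\Ch(\mC V)$ and $\Ch[\mC C,\mC V]$ and descends to the homotopy categories $\K(\mC V)$ and $\K[\mC C,\mC V]$.

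First I would establish the derived adjunction, and here the three hypotheses enter, each supplying the resolutions needed to pass from $\K$ to $\D$ on the appropriate side. Under (1), the $\K$-level adjunction together with the fact that $\ev_c$ preserves acyclicity shows that $\mC V_{\mC C}(c,-)\oslash P_j$ is $\K$-projective: for acyclic $Y$ one has $\Hom_{\K[\mC C,\mC V]}(\mC V_{\mC C}(c,-)\oslash P_j, Y)\cong\Hom_{\K(\mC V)}(P_j, Y(c))=0$. Morphisms out of a $\K$-projective object compute derived $\Hom$, and $P_j$ is itself $\K$-projective, so both sides of the displayed isomorphism equal homotopy classes of maps and the adjunction descends. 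Under (2), I would instead resolve the target: choosing a $\K$-injective resolution $X\to iX$, hypothesis (2) makes $(iX)(c)$ a $\K$-injective complex, and since $X(c)\to(iX)(c)$ is a quasi-isomorphism it is a $\K$-injective resolution of $X(c)$; computing the derived $\Hom$ on each side as homotopy classes into these $\K$-injective objects, the $\K$-level adjunction yields the isomorphism. Under (3) this is repeated model-categorically: with $Q_j=P^c_j$ a cofibrant replacement on the left and an injective fibrant replacement on the right, hypothesis (3) guarantees that the latter evaluates to a fibrant, hence derived-$\Hom$-computing, complex in $\Ch(\mC V)$, and the Quillen adjunction furnishes the same isomorphism.

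Granting the derived adjunction, the theorem follows formally. For compactness, given a family $\{X_i\}$ in $\D[\mC C,\mC V]$ we compute
\begin{equation*}
\Hom\Bigl(\mC V_{\mC C}(c,-)\oslash Q_j,\ \coprod_i X_i\Bigr)\cong\Hom\Bigl(Q_j,\ \coprod_i X_i(c)\Bigr)\cong\coprod_i\Hom(Q_j,X_i(c))\cong\coprod_i\Hom(\mC V_{\mC C}(c,-)\oslash Q_j, X_i),
\end{equation*}
using that $\ev_c$ commutes with coproducts and that each $Q_j$ is compact in $\D(\mC V)$ (in case (3), $P^c_j\cong P_j$, so compactness is inherited). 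For generation, suppose $\Hom_{\D[\mC C,\mC V]}(\mC V_{\mC C}(c,-)\oslash Q_j, X[n])=0$ for all $c$, $j$ and $n$; the adjunction gives $\Hom_{\D(\mC V)}(Q_j, X(c)[n])=0$ for all $j,n$, and since $\{Q_j\}$ generate $\D(\mC V)$ this forces $X(c)\cong 0$, i.e.\ $X(c)$ acyclic, for every $c$. As $\ev_c$ detects acyclicity, $X\cong 0$. Together with the observation that $\{\mC V_{\mC C}(c,-)\oslash Q_j\}$ is a set and that $[\mC C,\mC V]$ is Grothendieck (so $\D[\mC C,\mC V]$ has all coproducts), this establishes compact generation.

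I expect the main obstacle to be the descent of the adjunction to the derived categories in cases (2) and (3). In case (1) the tensored representable is itself $\K$-projective and everything is transparent, but when the $P_j$ are not assumed $\K$-projective we have no control over the left-hand object and must route through resolutions of the target $X$; the crux is precisely that evaluation of a $\K$-injective (respectively injective fibrant) resolution is again a resolution of the correct homotopical type in $\Ch(\mC V)$, which is exactly the content of hypotheses (2) and (3). Verifying that the chain-level adjunction is compatible with the chain homotopy relation, and that the resolutions can be chosen so that the isomorphism is natural in $X$, is the technical heart of the argument.
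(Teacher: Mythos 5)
Your proposal is correct and follows essentially the same route as the paper: both hinge on establishing the natural isomorphism $\D[\mC C,\mC V]\left(\mC V_{\mC C}(c,-)\oslash Q_j,X\right)\cong\D(\mC V)\left(Q_j,X(c)\right)$ via the tensor--Hom/Yoneda adjunction, using each of the three hypotheses to pass from homotopy to derived categories, and then deducing generation and compactness formally from pointwise detection of acyclicity and commutation of evaluation with coproducts. The only cosmetic difference is in case (1), where you show $\mC V_{\mC C}(c,-)\oslash P_j$ is itself $\K$-projective instead of routing through the injective fibrant replacement $X_f$ as the paper does; both are valid.
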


The formulations of the first two statements of the theorem have nothing to do with model categories
and use the terminology of the classical homological algebra only. However, in practice these statements are
normally covered by the situation when $\Ch(\mC V)$ is equipped with a ``projective model structure
with certain finiteness conditions" or when every evaluation functor $Ev_c:\Ch[\cc C,\mC V]\to\Ch(\mC V)$,
$c\in\cc C$, is right Quillen. In this case we should be able to extend homological/homotopical information from
$\Ch(\cc V)$ to $\Ch[\cc C,\cc V]$. To this end, we need the following result proved in Theorems~\ref{ChVthm}
and~\ref{sqbracket}.

\begin{thm}
Let $\cc V$ be a closed symmetric monoidal Grothendieck category and $\cc C$ be a small $\cc V$-category.
Then the category of chain complexes $\Ch(\mC V)$ is closed
symmetric monoidal Gro\-then\-dieck and 
the category $\Ch[\mC C,\mC V]$ is naturally isomorphic to the category $[\mC C,\Ch(\mC V)]$,
where $\mC C$ is enriched over $\Ch(\mC V)$ by the obvious complexes concentrated in degree zero.
\end{thm}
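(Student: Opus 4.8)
The plan is to establish the two assertions in turn. That $\Ch(\mC V)$ is Grothendieck I would argue entirely degreewise: kernels, cokernels, and arbitrary colimits are computed in each degree in $\mC V$, so $\Ch(\mC V)$ is abelian, and (AB5) holds because exactness is tested degreewise and filtered colimits are exact in $\mC V$. For a generator I would take a generating set $\{g_i\}_{i\in I}$ of $\mC V$ and pass to the disk complexes $D^n(g_i)$, namely $g_i\xrightarrow{\ \id\ }g_i$ placed in degrees $n$ and $n-1$; the natural isomorphism $\Hom_{\Ch(\mC V)}(D^n(g_i),X)\cong\Hom_{\mC V}(g_i,X_n)$ shows that a nonzero map $X\to Y$ stays nonzero after precomposition with a suitable $D^n(g_i)\to X$, so $\{D^n(g_i)\mid i\in I,\ n\in\bB{Z}\}$ generates and $\Ch(\mC V)$ is Grothendieck.

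For the closed symmetric monoidal structure I would use the usual tensor product $(A\tens B)_n=\bigoplus_{p+q=n}A_p\tens B_q$, with differential $\dtens{A}{p}{B}{q}$ on $A_p\tens B_q$ and unit $e$ concentrated in degree zero; the associativity, unit, and symmetry constraints are induced from those of $\mC V$ together with the Koszul sign rule, and the coherence axioms reduce to those in $\mC V$ after checking signs. Since $\mC V$ is closed, $-\tens B_q$ preserves colimits, hence so does $-\tens B$ on $\Ch(\mC V)$ (the formula uses only finite coproducts and degreewise tensoring, and colimits are degreewise). As $\Ch(\mC V)$ is Grothendieck, the adjoint functor theorem then produces a right adjoint, which I would identify with the explicit internal-hom complex $[A,B]_n=\prod_p[A_p,B_{p+n}]$ carrying the differential $\dHom{A}{p}{B}{n}$ on the factor $[A_p,B_{p+n}]$, thereby verifying the adjunction $\Hom(A\tens B,C)\cong\Hom(A,[B,C])$.

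For the isomorphism of categories I would build mutually inverse functors. In one direction a complex $\cdots\to F_{n+1}\to F_n\to F_{n-1}\to\cdots$ in $\Ch[\mC C,\mC V]$ is sent to $c\mapsto G(c)$, where $G(c)$ is the complex $(F_n(c))_n$ with the evaluated differentials; the task is to see that $G$ carries a canonical $\Ch(\mC V)$-enriched functor structure. Because $\mC C(c,c')$ sits in degree zero, an enriched structure map $\mC C(c,c')\to[G(c),G(c')]$ in $\Ch(\mC V)$ is the same as a morphism of $\mC V$ into the zero-cycles of the hom-complex; but a degree-zero family in $\prod_p[G(c)_p,G(c')_p]$ is a cycle precisely when it commutes with the differentials. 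Thus the structure map is exactly a family of $\mC V$-morphisms $\mC C(c,c')\to[F_n(c),F_n(c')]$ compatible with the $d_n$, i.e. the enriched-functor structure of each $F_n$ together with the naturality of the differentials $d_n\colon F_n\Rightarrow F_{n-1}$, and the composition and identity axioms for $G$ hold because they hold for each $F_n$. Conversely an enriched functor $\mC C\to\Ch(\mC V)$ produces in each degree a functor $F_n=G(-)_n$ and natural differentials squaring to zero, that is, an object of $\Ch[\mC C,\mC V]$; morphisms correspond in the same degreewise manner. Since both assignments leave the underlying degreewise data unchanged, they are mutually inverse, so the correspondence is an isomorphism and not merely an equivalence of categories.

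The main obstacle I expect is the closed structure: pinning down the signed differential on $[A,B]$ and checking that it is genuinely right adjoint to $\tens$, since here coherence and the tensor--hom adjunction must be verified over an arbitrary closed symmetric monoidal Grothendieck $\mC V$ rather than over a module category, so the sign bookkeeping in the pentagon, hexagon, and the unit and counit of the adjunction is where the real care is needed. By contrast, once the closed structure is in hand the category isomorphism is largely a matter of matching definitions, the one substantive point being the identification of zero-cycles in the hom-complex with differential-compatible natural families.
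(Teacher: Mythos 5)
Your proposal is correct and follows essentially the same route as the paper: the same signed tensor and internal-hom complexes on $\Ch(\mC V)$, and the same key observation that, because $\Ch(\mC V)_{\mC C}(a,b)$ is concentrated in degree zero, an enriched structure map is exactly a $\mC V$-morphism into the zero-cycles of $\prod_p[F(a)_p,F(b)_p]$, i.e.\ a differential-compatible family. The one point you pass over that the paper works out explicitly (its Section~\ref{EnrStruct}) is that the composition of the self-enrichment of $\Ch(\mC V)$, restricted to degree zero, is componentwise composition in $\mC V$ (the Koszul sign $(-1)^{pq}$ being $+1$ there), which is what actually justifies your reduction of the composition and unit axioms for $G$ to those of the individual $F_n$.
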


As an application of the theorems, we can generate numerous (closed symmetric monoidal) compactly 
generated triangulated categories which are of independent interest. 
Moreover, several important results of~\cite{AG} are extended from $\mathbf D(\cc C_R)$ to $\D[\mC C,\mC V]$.
Other applications are expected 
in the study of pure-injectivity of compactly generated triangulated categories, in the telescope conjecture
for compactly generated triangulated categories and in the study of Voevodsky's triangulated categories of motives.
The flexibility of the theorems is that we can vary $\cc V$ in practice. Furthermore, $\cc V$ itself can contain 
rich homological/homotopical structures, in which case we can use the homological algebra and the Bousfield
localization theory of $\D[\mC C,\mC V]$ together with
homological/homotopical structures of $\cc V$. In order to operate with such structures in practice, we need 
the above theorems.

\section{Enriched Category Theory}\label{putrya}

In this section we collect basic facts about enriched categories we
shall need later. We refer the reader to~\cite{Borceux,Riehl} for details.
Throughout this paper the quadruple $(\mathcal{V},\otimes,\underline{\Hom},e)$ is
a closed symmetric monoidal category with monoidal product
$\otimes$, internal Hom-object $\underline{\Hom}$ and monoidal unit
$e$. We sometimes write $[a,b]$ to denote $\underline{\Hom}(a,b)$,
where $a,b\in\Ob\cc V$. We have structure isomorphisms
   $$a_{abc}:(a\otimes b)\otimes c \to a\otimes (b\otimes c),\quad l_a:e\otimes a\to a,\quad r_a:a\otimes e\to a$$
in $\cc V$ with $a,b,c\in\Ob\cc V$.

\begin{definition}
A {\it $\mathcal{V}$-category}  $\mathcal{C}$, or {\it a category
enriched over $\mathcal{V}$}, consists of the following data:
\begin{enumerate}
\item a class $\Ob\mathcal{(C)}$ of objects;
\item for every pair $a,b \in$ $\Ob\mathcal{(C)}$ of objects, an object  $\mathcal{V}_{\cc C}(a,b)$ of
$\mathcal{V}$;
\item for every triple $a,b,c \in$ $\Ob\mathcal{(C)}$ of objects, a composition morphism in
  $\mathcal{V}$,
 $$c_{abc}:\mathcal{V}_{\cc C}(a,b) \otimes  \mathcal{V}_{\cc C}(b,c) \to  \mathcal{V}_{\cc C}(a,c);$$
\item for every object $a \in \mathcal{C}$, a unit morphism $u_a:e\to\mathcal{V}_{\cc C}(a,a)$ in
$\mathcal{V}$.
\end{enumerate}
These data must satisfy the natural associativity and unit axioms.

When $\Ob\cc C$ is a set, the $\cc V$-category $\cc C$
is called a {\it small $\cc V$-category}. 
\end{definition}

\begin{definition} \label{functor}
Given $\mathcal{V}$-categories $\mathcal{A},\mathcal{B}$, a {\it
$\mathcal{V}$-functor\/} or an {\it enriched functor\/}
$F:\mathcal{A} \to \mathcal{B}$ consists in giving:
\begin{enumerate}
\item for every object $a \in \mathcal{A}$, an object $F(a) \in
\mathcal{B}$;
\item for every pair $a,b \in \mathcal{A}$ of objects, a morphism in  $\mathcal{V},$
$$F_{ab}:\mathcal{V}_{\cc A}(a,b) \to  \mathcal{V}_{\cc B}(F(a),F(b))$$
in such a way that the following axioms hold:
\begin{enumerate}
\item[$\diamond$] for all objects $a,a',a''\in\cc A$, diagram~\eqref{B6.11} below commutes (composition
axiom);
\item[$\diamond$] for every object $a\in\cc A$, diagram~\eqref{fun} below commutes (unit
axiom).
\end{enumerate}
\begin{equation}    \label{B6.11}
\xymatrix{ \cc V_\cc A(a,a')\otimes\cc V_\cc
A(a',a'')\ar[rr]^(.6){c_{aa'a''}}\ar[d]_{F_{aa'}\otimes F_{a'a''}}
&&\cc V_\cc A(a,a'')\ar[d]^{F_{aa''}}\\
\cc V_\cc B(Fa,Fa')\otimes\cc V_\cc
B(Fa',Fa'')\ar[rr]_(.6){c_{Fa,Fa',Fa''}}&&\cc V_\cc B(Fa,Fa'') }
\end{equation}
\begin{equation}    \label{fun}
\xymatrix{
e\ar[r]^(.4){u_a}\ar[dr]_{u_{Fa}}&\cc V_\cc A(a,a)\ar[d]^{F_{aa}}\\
&\cc V_\cc B(Fa,Fa) }
\end{equation}
\end{enumerate}
\end{definition}

\begin{definition}\label{trans}
Let $\cc A, \cc B$ be two $\cc V$-categories and $F,G:\cc A\to \cc
B$ two $\cc V$-functors. A {\it $\cc V$-natural transformation\/}
$\alpha:F \Rightarrow G$ consists in giving, for every object
$a\in\cc A$, a morphism
    $$\alpha_a:e \to \cc V_\cc B(F(a),G(a))$$
in $\cc V$ such that diagram below commutes, for all
objects $a,a'\in\cc A$.
\begin{equation*}    \label{B6.13}
\xymatrix{
&\cc V_{\cc A}(a,a')\ar[ddl]_{l^{-1}_{\cc V_{\cc A}(a,a')}}\ar[ddr]^{r^{-1}_{\cc V_{\cc A}(a,a')}}&\\
&&\\
e\otimes\cc V_{\cc A}(a,a')\ar[d]_{\alpha_a \otimes G_{aa'}}&
&\cc V_{\cc A}(a,a')\otimes e\ar[d]^{F_{aa'}\otimes \alpha_{a'}}\\
\cc V_{\cc B}(Fa,Ga)\otimes\cc V_{\cc
B}(Ga,Ga')\ar[ddr]_{c_{FaGaGa'}\text{ }}
&&\cc V_{\cc B}(Fa,Fa')\otimes\cc V_{\cc B}(Fa',Ga')\ar[ddl]^(.45){\text{       }\text{  }  c_{FaFa'Ga'}}\\
&&\\
&\cc V_{\cc B}(Fa,Ga')& }
\end{equation*}
\end{definition}

Any $\cc V$-category $\cc C$ defines an ordinary category $\it{\cc{UC}}$, also called the {\it
underlying category}. Its class of objects is $\Ob \cc C$, the
morphism sets are $\Hom_{{\cc{UC}}}{(a,b)}:=\Hom_{\cc
V}(e, \cc{V_C}{(a,b))}$ (see~\cite[p.~316]{Borceux}).

Let $\cc C, \cc D$ be two $\cc V$-categories. The {\it monoidal
product\/} $\cc C\otimes \cc D$ is the $\cc V$-category, where
   $$\Ob(\cc C\otimes \cc D):=\Ob\cc C\times\Ob\cc D$$
and
   $$\cc V_{\cc C\otimes \cc D}((a, x),(b, y)):=\cc V_{\cc C}(a, b)\otimes\cc V_{\cc D}(x, y),\quad a,b\in\cc C,x,y\in\cc D.$$

\begin{definition}
A $\cc V$-category $\cc C$ is a {\it right $\cc V$-module} if there
is a $\cc V$-functor $\act:\cc C\otimes \cc V \to \cc C$, denoted
$(c,A) \mapsto c\oslash A$ and a $\cc V$-natural unit isomorphism
$r_c:\act(c,e)\to c$ subject to the following conditions:
\begin{enumerate}
\item there are coherent natural associativity isomorphisms $c\oslash (A \otimes B) \to (c \oslash A) \otimes
B$;
\item the isomorphisms $c\oslash (e \otimes A)\rightrightarrows c \oslash A$ coincide.

\end{enumerate}

A right $\cc V$-module is {\it closed\/} if there is a $\cc
V$-functor
$$\coact:\cc V^{\op} \otimes \cc C \to \cc C$$
such that for all $A \in \Ob \cc V$, and $c \in \Ob\cc C$, the $\cc
V$-functor $\act(-,A):\cc C \to \cc C$ is left
 $\cc V$-adjoint to $\coact(A,-)$ and $\act(c,-):\cc V \to \cc C$
is left $\cc V$-adjoint to $\cc V_{\cc C}(c,-)$. 
\end{definition}

If $\cc C$ is a small $\cc V$-category, $\cc V$-functors from $\cc
C$ to $\cc V$ and their $\cc V$-natural transformations form the
category $[\cc C,\cc V]$ of $\cc V$-functors from $\cc C$ to $\cc
V$. If $\cc V$ is complete, then $[\cc C,\cc V]$ is also a $\cc
V$-category whose morphism $\cc
V$-object $\cc V_{[\cc C,\cc V]}(X,Y)$ is the end
   \begin{equation*}\label{theend}
    \int_{\Ob \cc C} \cc V (X(c),Y(c)).
   \end{equation*}
   
\begin{lemma}\label{closedmod}
Let $\cc V$ be a complete closed symmetric monoidal category, and 
$\cc C$ be a small $\cc V$-category. Then $[\cc C,\cc V]$ is a closed $\cc V$-module.
\end{lemma}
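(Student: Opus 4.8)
The plan is to define the module action pointwise and then read off the coaction and the two required adjunctions from the closed symmetric monoidal structure of $\cc V$ together with the continuity (limit-preservation) properties of ends. First I would set, for a $\cc V$-functor $X\in[\cc C,\cc V]$ and an object $A\in\cc V$,
$$(X\oslash A)(c):=X(c)\otimes A,\qquad c\in\Ob\cc C,$$
giving $X\oslash A$ its $\cc V$-functorial structure by composing $X_{cc'}\colon\cc V_{\cc C}(c,c')\to[X(c),X(c')]$ with the map $[X(c),X(c')]\to[X(c)\otimes A,X(c')\otimes A]$ coming from the fact that $-\otimes A$ is a $\cc V$-functor. Functoriality of $\act$ as a $\cc V$-functor $[\cc C,\cc V]\otimes\cc V\to[\cc C,\cc V]$ is then induced on morphism objects by the monoidal product $\otimes\colon\cc V\otimes\cc V\to\cc V$ together with the universal property of the end defining $\cc V_{[\cc C,\cc V]}$. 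The unit isomorphism $r_X\colon X\oslash e\to X$ is defined pointwise by the right unit isomorphisms $r_{X(c)}\colon X(c)\otimes e\to X(c)$ of $\cc V$, and the associativity isomorphisms $X\oslash(A\otimes B)\to(X\oslash A)\oslash B$ are defined pointwise by the associators of $\cc V$; the coherence and naturality axioms for a right $\cc V$-module are then inherited pointwise from the corresponding coherence in $\cc V$, so these verifications are routine.

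Next I would exhibit the closed structure. Define the coaction $\coact\colon\cc V^{\op}\otimes[\cc C,\cc V]\to[\cc C,\cc V]$ pointwise by
$$\coact(A,X)(c):=[A,X(c)],\qquad c\in\Ob\cc C,$$
with $\cc V$-functorial structure assembled from that of $X$ and the bifunctoriality of the internal hom; as with $\act$, the fact that $\coact$ is a $\cc V$-functor follows from $[-,-]$ being a $\cc V$-functor $\cc V^{\op}\otimes\cc V\to\cc V$. It remains to verify the two adjunctions. For $\act(-,A)\dashv\coact(A,-)$, applying the tensor--hom adjunction $[X(c)\otimes A,Y(c)]\cong[X(c),[A,Y(c)]]$ inside the end defining $\cc V_{[\cc C,\cc V]}$ yields a $\cc V$-natural isomorphism
$$\cc V_{[\cc C,\cc V]}(X\oslash A,Y)=\int_c[X(c)\otimes A,Y(c)]\cong\int_c[X(c),[A,Y(c)]]=\cc V_{[\cc C,\cc V]}(X,\coact(A,Y)).$$
For $\act(X,-)\dashv\cc V_{[\cc C,\cc V]}(X,-)$, I would combine the symmetry $X(c)\otimes A\cong A\otimes X(c)$, the adjunction $[A\otimes X(c),Y(c)]\cong[A,[X(c),Y(c)]]$, and the fact that $[A,-]$ preserves ends (being a right adjoint, hence continuous) to obtain
$$\cc V_{[\cc C,\cc V]}(X\oslash A,Y)=\int_c[X(c)\otimes A,Y(c)]\cong\int_c[A,[X(c),Y(c)]]\cong[A,\int_c[X(c),Y(c)]]=[A,\cc V_{[\cc C,\cc V]}(X,Y)].$$

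The main obstacle I anticipate is not the existence of the pointwise isomorphisms, which are immediate from the closed symmetric monoidal axioms of $\cc V$, but rather checking that they are compatible with the dinatural wedges defining the ends and that the resulting bijections are \emph{$\cc V$-natural}, i.e.\ genuine $\cc V$-adjunctions rather than merely adjunctions of the underlying ordinary categories. Concretely, one must verify that the tensor--hom isomorphism is dinatural in $c$ so that it descends to the ends, and that commuting $[A,-]$ past the end is legitimate; both reduce to the universal property of the end together with the naturality of the structure isomorphisms of $\cc V$, but they are precisely the steps that invoke the completeness hypothesis on $\cc V$ and the $\cc V$-functoriality established in the first two paragraphs.
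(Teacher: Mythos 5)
The paper offers no argument of its own for this lemma---it simply cites \cite[2.4]{DRO}---and your proof is exactly the standard construction carried out in that reference: the pointwise action $(X\oslash A)(c)=X(c)\otimes A$ and coaction $\coact(A,X)(c)=[A,X(c)]$, the tensor--hom adjunction applied under the end defining $\cc V_{[\cc C,\cc V]}$, and continuity of $[A,-]$ to pull it outside the end. Your sketch is correct, and it rightly isolates the only nontrivial points: that the ends exist (completeness of $\cc V$) and that the isomorphisms are dinatural in $c$ and $\cc V$-natural, so that one obtains genuine $\cc V$-adjunctions rather than adjunctions of underlying categories.
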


\begin{proof}
See~\cite[2.4]{DRO}.
\end{proof}

Given $c \in \Ob \cc C$, $X\mapsto X(c)$ defines the $\cc V$-functor
$\text{Ev}_c:[\cc C,\cc V]\to\cc V$ called {\it evaluation at c}. The
assignment $c\mapsto\cc V_\cc C (c,-)$ from $\cc C$ to $[\cc C,\cc V]$ is
again a $\cc V$-functor $\cc C^{\op}\to[\cc C,\cc V]$, called the {\it $\cc
V$-Yoneda embedding}. $\cc V_\cc C (c,-)$ is a representable
functor, represented by $c$.

\begin{lemma}[The Enriched Yoneda Lemma]\label{enryon}
Let $\cc V$ be a complete closed symmetric monoidal category and
$\cc C$ a small $\cc V$-category. For every $\cc V$-functor $X:\cc C
\to \cc V$ and every $c\in \Ob \cc C$, there is a $\cc V$-natural
isomorphism $X(c) \cong \cc V_\cc F (\cc V_\cc C (c,-),X)$.
\end{lemma}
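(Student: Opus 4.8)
The plan is to exhibit an explicit mutually inverse pair of morphisms in $\cc V$ between $X(c)$ and the end $\cc V_\cc F(\cc V_\cc C(c,-),X)=\int_{d\in\cc C}[\cc V_\cc C(c,d),X(d)]$, and then to observe that the comparison is $\cc V$-natural in $X$. First I would build the forward map. For each $d\in\cc C$ the enriched functor $X$ supplies $X_{cd}\colon\cc V_\cc C(c,d)\to[X(c),X(d)]$; composing $X_{cd}\otimes\id$ with the evaluation $\ev$ gives an action morphism $\cc V_\cc C(c,d)\otimes X(c)\to X(d)$, whose adjoint transpose (after the symmetry $\swap$) under the tensor--hom adjunction is a map
$$\phi_d\colon X(c)\to[\cc V_\cc C(c,d),X(d)].$$
I would then check that the family $\{\phi_d\}$ satisfies the (enriched) wedge condition for the end; this is exactly a restatement of the composition axiom \eqref{B6.11} for $X$, after transposing across the relevant adjunctions. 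By the universal property of the end this produces a canonical morphism $\Phi\colon X(c)\to\int_{d\in\cc C}[\cc V_\cc C(c,d),X(d)]$ with $\pi_d\circ\Phi=\phi_d$, where $\pi_d$ denotes the $d$-th projection of the end.

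Next I would construct the candidate inverse using the unit of $\cc C$ at $c$. Taking the projection at $d=c$ and restricting along $u_c\colon e\to\cc V_\cc C(c,c)$ gives
$$\Psi\colon \int_{d\in\cc C}[\cc V_\cc C(c,d),X(d)]\xrightarrow{\ \pi_c\ }[\cc V_\cc C(c,c),X(c)]\xrightarrow{[u_c,\id]}[e,X(c)]\cong X(c),$$
where the last isomorphism is induced by $l_{X(c)}\colon e\otimes X(c)\to X(c)$. The identity $\Psi\circ\Phi=\id_{X(c)}$ then reduces, after unwinding the adjunctions, to the unit axiom \eqref{fun} for $X$ (namely $X_{cc}\circ u_c=u_{X(c)}$) together with the triangle identities for the tensor--hom adjunction, so this direction is essentially formal.

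The main obstacle is the opposite identity $\Phi\circ\Psi=\id$. Since a morphism into an end is determined by its composites with the projections $\pi_d$, it suffices to show $\pi_d\circ\Phi\circ\Psi=\pi_d$ for every $d$. Unwinding $\Phi$ via $\phi_d$ and $\Psi$ via $\pi_c$ and $u_c$, this amounts to the statement that precomposing the action by the unit $u_c$ and then re-acting recovers the original wedge component; the equality is forced by the enriched dinaturality (wedge) condition defining the end, applied to the composition $\cc V_\cc C(c,c)\otimes\cc V_\cc C(c,d)\to\cc V_\cc C(c,d)$, combined with the associativity and unit axioms of $\cc C$. Carrying this out requires a careful diagram chase translating the wedge condition into a statement about the composition morphisms $c_{ccd}$; this bookkeeping, rather than any conceptual difficulty, is where the real work lies. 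Finally, $\cc V$-naturality of the isomorphism in $X$ follows because both the $\phi_d$ and the projections $\pi_d$ are natural in $X$ (the adjunction transposes and the universal property of ends being natural), so $\Phi$ assembles into a $\cc V$-natural transformation $\mathrm{Ev}_c\Rightarrow\cc V_\cc F(\cc V_\cc C(c,-),-)$ which we have just shown is pointwise invertible.
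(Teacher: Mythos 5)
The paper does not prove this lemma at all: it is quoted as a classical fact, with the references \cite{Borceux,Riehl} given at the start of Section~\ref{putrya} standing in for a proof. Your outline is the standard argument from those sources and is sound: the components $\phi_d$ obtained by transposing the action maps $\ev\circ(X_{cd}\otimes\id)$ do form a wedge by the composition axiom~\eqref{B6.11}, the retraction $\Psi$ built from $\pi_c$ and $u_c$ satisfies $\Psi\circ\Phi=\id$ by the unit axiom~\eqref{fun}, and $\pi_d\circ\Phi\circ\Psi=\pi_d$ follows from the end's wedge condition at the pair $(c,d)$ restricted along $u_c$ --- for which only the unit axiom $c_{ccd}\circ(u_c\otimes\id)=l$ of $\cc C$ is needed, not associativity as you suggest. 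The only caveat is that the decisive diagram chase for $\Phi\circ\Psi=\id$ is described rather than carried out, but the identification of which axiom forces which identity is correct throughout.
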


\begin{lemma}\label{bicomplete}
If $\cc V$ is a bicomplete closed symmetric monoidal category and
$\cc C$ is a small $\cc V$-category, then $[\cc C,\cc V]$ is
bicomplete. (Co)limits are formed pointwise.
\end{lemma}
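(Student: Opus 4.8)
The plan is to construct all small limits and colimits in $[\cc C,\cc V]$ objectwise and then promote them to the enriched setting. Fix a small ordinary category $I$ and a diagram $D\colon I\to[\cc C,\cc V]$, writing $D_i:=D(i)$ for the value at $i\in I$; each $D_i$ is a $\cc V$-functor with structure morphisms $(D_i)_{ab}\colon\cc V_\cc C(a,b)\to[D_i(a),D_i(b)]$, and for an arrow $i\to j$ in $I$ we have a $\cc V$-natural transformation $D_i\Rightarrow D_j$. Since $\cc V$ is bicomplete, for each $c\in\Ob\cc C$ the evaluated diagram $i\mapsto D_i(c)$ has a limit and a colimit in $\cc V$, and I would set
$$(\varprojlim D)(c):=\varprojlim_i D_i(c),\qquad (\colim D)(c):=\colim_i D_i(c).$$
It then remains to endow these assignments with a $\cc V$-functor structure and to verify the appropriate universal property.

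Next I would build the enriched functoriality, where the key point is that $\cc V$ is closed, so $-\otimes X$ is a left adjoint and hence preserves colimits. Passing each $(D_i)_{ab}$ through the tensor--hom adjunction yields action maps $\alpha_i\colon\cc V_\cc C(a,b)\otimes D_i(a)\to D_i(b)$. For the colimit, composing these with the coprojections $D_i(b)\to(\colim D)(b)$ gives a cocone under $i\mapsto\cc V_\cc C(a,b)\otimes D_i(a)$---the required compatibility being the naturality of the transformations $D_i\Rightarrow D_j$---so the canonical isomorphism $\colim_i\bigl(\cc V_\cc C(a,b)\otimes D_i(a)\bigr)\cong\cc V_\cc C(a,b)\otimes(\colim D)(a)$ produces a single morphism $\cc V_\cc C(a,b)\otimes(\colim D)(a)\to(\colim D)(b)$ whose adjoint is the structure map $(\colim D)_{ab}$. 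For the limit the argument is formally dual: for each $i$ the composite $\cc V_\cc C(a,b)\otimes(\varprojlim D)(a)\to\cc V_\cc C(a,b)\otimes D_i(a)\xrightarrow{\alpha_i}D_i(b)$ is compatible over $I$, hence factors through $(\varprojlim D)(b)=\varprojlim_i D_i(b)$, and adjunction supplies $(\varprojlim D)_{ab}$. The composition axiom \eqref{B6.11} and unit axiom \eqref{fun} for these structure maps then follow by composing both sides with the relevant (co)projections and invoking the corresponding axioms for each $D_i$.

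Finally I would check the universal property in the underlying category of $[\cc C,\cc V]$. A cone over $D$ with vertex $Y$ is a compatible family of $\cc V$-natural transformations $Y\Rightarrow D_i$; evaluating at each $c$ gives a cone over $i\mapsto D_i(c)$ in $\cc V$ and hence, by the pointwise universal property, a unique morphism $Y(c)\to(\varprojlim D)(c)$ for every $c$. Uniqueness of the factorization is immediate from the pointwise statement, so the substance lies in verifying that this family is itself $\cc V$-natural, i.e.\ that it defines a morphism in $[\cc C,\cc V]$. The colimit case is dual, using the cocone data and the objectwise colimits.

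I expect the main obstacle to be exactly this last verification of enriched naturality together with the $\cc V$-functor axioms for the (co)limit object: the objectwise definition and its ordinary universal property are routine, but one must chase the naturality diagram of Definition~\ref{trans} and the axioms \eqref{B6.11}, \eqref{fun} across the tensor--hom adjunction. The structural fact that makes everything go through is that $\cc V$ is closed symmetric monoidal, so that $-\otimes X$ preserves colimits and $[X,-]$ preserves limits; this is precisely what allows the (co)limit to be computed pointwise while remaining compatible with the enrichment.
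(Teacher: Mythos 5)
Your proposal is correct and follows the standard pointwise construction that the paper simply delegates to Borceux~\cite[6.6.17]{Borceux}: define $(\varprojlim D)(c)$ and $(\colim D)(c)$ objectwise, use that $\cc V_\cc C(a,b)\otimes-$ preserves colimits (and $[-,Y(b)]$ sends colimits to limits) to transport the enrichment and verify the axioms~\eqref{B6.11}, \eqref{fun} and the $\cc V$-naturality of the universal cones. This is essentially the same argument as the cited proof, so there is nothing further to add.
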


\begin{proof}
See~\cite[6.6.17]{Borceux}.
\end{proof}

\begin{cor}\label{polezno}
Assume $\cc V$ is bicomplete, and let $\cc C$ be a small $\cc
V$-category. Then any $\cc V$-functor $X:\cc C\to\cc V$ is $\cc
V$-naturally isomorphic to the coend
    $$X\cong\int^{\Ob\cc C}\cc V_\cc C(c,-)\oslash X(c).$$
\end{cor}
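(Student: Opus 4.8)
The plan is to establish the isomorphism by verifying the universal property of the coend through a chain of $\cc V$-natural isomorphisms of corepresented functors, and then to invoke Yoneda. First I would note that the coend $\int^{\Ob\cc C}\cc V_\cc C(c,-)\oslash X(c)$ exists as an object of $[\cc C,\cc V]$: by Lemma~\ref{bicomplete} the functor category is bicomplete with colimits computed pointwise, and a coend is a particular colimit (the universal cowedge). The action $\oslash$ and the adjunctions I will use are exactly the data of the closed $\cc V$-module structure furnished by Lemma~\ref{closedmod}. The canonical comparison map $\int^{\Ob\cc C}\cc V_\cc C(c,-)\oslash X(c)\to X$ arises from the counits of these adjunctions, but I will not need it explicitly, since the corepresentable computation below produces the isomorphism abstractly.

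Second, for an arbitrary test functor $Y\in[\cc C,\cc V]$ I would compute the morphism $\cc V$-object out of the coend. Because the contravariant hom $\cc V_{[\cc C,\cc V]}(-,Y)$ is continuous, sending coends to ends, we get
$$\cc V_{[\cc C,\cc V]}\Big(\int^{\Ob\cc C}\cc V_\cc C(c,-)\oslash X(c),\,Y\Big)\cong \int_{\Ob\cc C}\cc V_{[\cc C,\cc V]}\big(\cc V_\cc C(c,-)\oslash X(c),\,Y\big).$$
I then identify the integrand using the closed-module adjunction of Lemma~\ref{closedmod}, under which $\cc V_\cc C(c,-)\oslash-$ is left $\cc V$-adjoint to $\cc V_{[\cc C,\cc V]}(\cc V_\cc C(c,-),-)$, followed by the Enriched Yoneda Lemma~\ref{enryon} in the form $\cc V_{[\cc C,\cc V]}(\cc V_\cc C(c,-),Y)\cong Y(c)$:
$$\cc V_{[\cc C,\cc V]}\big(\cc V_\cc C(c,-)\oslash X(c),\,Y\big)\cong \cc V\big(X(c),\,\cc V_{[\cc C,\cc V]}(\cc V_\cc C(c,-),Y)\big)\cong\cc V\big(X(c),Y(c)\big).$$

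Third, substituting this into the end and invoking the defining formula $\cc V_{[\cc C,\cc V]}(X,Y)=\int_{\Ob\cc C}\cc V(X(c),Y(c))$ for the morphism $\cc V$-object, I obtain
$$\cc V_{[\cc C,\cc V]}\Big(\int^{\Ob\cc C}\cc V_\cc C(c,-)\oslash X(c),\,Y\Big)\cong\int_{\Ob\cc C}\cc V(X(c),Y(c))\cong\cc V_{[\cc C,\cc V]}(X,Y),$$
$\cc V$-natural in $Y$. Applying $\Hom_\cc V(e,-)$ turns this into a natural isomorphism of ordinary corepresented functors on the underlying category of $[\cc C,\cc V]$, so the ordinary Yoneda Lemma yields an isomorphism $X\cong\int^{\Ob\cc C}\cc V_\cc C(c,-)\oslash X(c)$; since the whole chain was $\cc V$-natural, this isomorphism is $\cc V$-natural, as claimed.

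The main obstacle I anticipate is bookkeeping the enriched naturality rather than any single hard idea. One must check that the adjunction isomorphism of Lemma~\ref{closedmod} and the Yoneda isomorphism of Lemma~\ref{enryon} are genuinely $\cc V$-natural and compatible with the dinaturality (wedge) conditions defining the end and the coend, so that the passage ``$\cc V_{[\cc C,\cc V]}(-,Y)$ carries the coend to the end'' is justified at the enriched level and not merely on underlying hom-sets. A secondary care point is matching the variance and the precise slot of the closed-module adjunction, so that $\cc V_\cc C(c,-)\oslash X(c)$ is fed into it with $X(c)$ in the acted-on variable and $\cc V_\cc C(c,-)$ held fixed; once these compatibilities are in place, every step is formal.
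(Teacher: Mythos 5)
Your argument is correct: the corepresentability computation (hom out of the coend turns it into an end, the closed-module adjunction of Lemma~\ref{closedmod} plus the enriched Yoneda Lemma~\ref{enryon} identify the integrand with $\cc V(X(c),Y(c))$, and the end formula for $\cc V_{[\cc C,\cc V]}(X,Y)$ together with Yoneda finishes the job) is the standard proof of this enriched density/co-Yoneda statement. The paper itself supplies no proof of Corollary~\ref{polezno} --- it is recorded as a standard consequence of Lemmas~\ref{closedmod}--\ref{bicomplete} with the references carrying the details --- so there is nothing to contrast your route with; your write-up simply makes explicit what the paper leaves implicit, and you correctly use the $\act(c,-)\dashv\cc V_{\cc C}(c,-)$ half of the closed-module structure rather than the $\coact$ half.
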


\section{The closed symmetric monoidal structure for chain complexes}

In this paper we deal with closed symmetric monoidal Grothendieck categories. Here are some examples.

\begin{example}
(1) Given any commutative ring $R$, the triple $(\Mod R,
\otimes_R,R)$ is a closed symmetric monoidal Grothendieck category.

(2) More generally, let $X$ be a quasi-compact quasi-separated
scheme. Consider the category $\Qcoh(\cc O_X)$ of quasi-coherent
$\cc O_X$-modules. By~\cite[3.1]{Illusie} $\Qcoh(\cc O_X)$ is a
locally finitely presented Grothendieck category, where
quasi-coherent $\cc O_X$-modules of finite type form a family of
finitely presented generators. The tensor product on $\cc
O_X$-modules preserves quasi-coherence, and induces a closed
symmetric monoidal structure on $\Qcoh(\cc O_X)$.

(3) Let $R$ be any commutative ring. Let $ C'=\{C_n',\partial_n'\}$
and $C''=\{C_n'',\partial_n''\}$ be two chain complexes of
$R$-modules. Their tensor product $C'\otimes_R C''=\{(C'\otimes_R
C'')_n,\partial_n\}$ is the chain complex defined by
  $$(C'\otimes_R C'')_n =\bigoplus_{i+j=n}(C_i'\otimes_R C_j''),$$
and
   $$\partial_n(t'_i\otimes s''_j) = \partial'_i(t'_i)\otimes s''_j + (-1)^i t'_i\otimes\partial_j''(s''_j),
      \quad \textrm{for all } t'_i\in C_i',\ s''_j\in C_j'',\ (i+j=n),$$
where $C_i'\otimes_R C_j''$ denotes the tensor product of
$R$-modules $C_i'$ and $C_j''$. Then the triple $(\Ch(\Mod
R),\otimes_R,R)$ is a closed symmetric monoidal
category. It is Grothendieck by~\cite[3.4]{AG}. Here $R$ is regarded
as a complex concentrated in the zeroth degree.

(4) ($\Mod kG,\otimes_k,k)$ is closed symmetric monoidal
Grothendieck category, where $k$ is a field and $G$ is a finite group.

(5) Given a field $F$, the category $NSwT/F$ of Nisnevich sheaves with transfers~\cite[Section~2]{SV} is
a closed symmetric monoidal Grothendieck category with
   $$\{\mathbb{Z}_{tr}(X)\mid X\textrm{ is an $F$-smooth algebraic variety}\}$$
a family of generators.
\end{example}

In this section we prove the following natural fact, as the authors were unable to find a
complete account in the literature. We find it necessary to give such a complete
account as it will be important to our analysis. The authors do not pretend to originality here.

\begin{theorem}\label{ChVthm}
Let \mC{V} be a closed symmetric monoidal Grothendieck category. Then the
category of chain complexes over \mC{V}, denoted $\Ch(\mC V)$, is closed
symmetric monoidal Grothendieck.
\end{theorem}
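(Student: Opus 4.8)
The plan is to check, one property at a time, that $\Ch(\mC V)$ meets the definition of a closed symmetric monoidal Grothendieck category, in each case reducing the assertion to the corresponding property of $\mC V$ together with Koszul sign bookkeeping. First I would record that $\Ch(\mC V)$ is abelian: the kernel and cokernel of a chain map are formed degreewise in $\mC V$, and the abelian axioms are inherited pointwise. Since limits and colimits in $\Ch(\mC V)$ are also computed degreewise, and $\mC V$ is bicomplete (being Grothendieck), $\Ch(\mC V)$ is bicomplete; in particular filtered colimits exist. Because these filtered colimits are degreewise and filtered colimits are exact in the Grothendieck category $\mC V$, the axiom (AB5) passes to $\Ch(\mC V)$.

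It then remains, for the Grothendieck part, to produce a generating set. Let $\{g_i\}_{i\in I}$ be a set of generators of $\mC V$, and for each $n\in\mathbb Z$ let $D^n(g_i)$ denote the \emph{disk complex} with $g_i$ placed in degrees $n$ and $n-1$, joined by the identity, and zero elsewhere. A direct computation shows that a chain map out of $D^n(g_i)$ is determined freely by its degree-$n$ component, giving a natural isomorphism $\Hom_{\Ch(\mC V)}(D^n(g_i),C)\cong\Hom_{\mC V}(g_i,C_n)$. Hence if $\Hom_{\Ch(\mC V)}(D^n(g_i),C)=0$ for all $i$ and all $n$, then $\Hom_{\mC V}(g_i,C_n)=0$ for all $i,n$, which forces each $C_n=0$ since $\{g_i\}$ generates, so $C=0$. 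Therefore $\{D^n(g_i)\mid i\in I,\ n\in\mathbb Z\}$ is a generating set, and $\Ch(\mC V)$ is Grothendieck.

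Next I would install the monoidal structure, taking Example~(3) as the template. For $C,D\in\Ch(\mC V)$ I set $(C\otimes D)_n=\bigoplus_{i+j=n}C_i\otimes D_j$ with the Koszul differential acting on the summand $C_i\otimes D_j$ as $\partial^C_i\otimes\id+(-1)^i\id\otimes\partial^D_j$. The coproducts exist since $\mC V$ is cocomplete, and the bifunctor $\otimes$ of $\mC V$ commutes with them in each variable precisely because $\mC V$ is closed, so the construction is well defined and functorial; a short computation gives $\partial^2=0$. The unit is $e$ regarded as a complex in degree zero, while the associativity, unit, and symmetry constraints are assembled from those of $\mC V$, the symmetry being twisted by the sign $(-1)^{ij}$ on the summand $C_i\otimes D_j$. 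The pentagon and hexagon axioms then reduce, summand by summand, to the coherence axioms of $\mC V$ once one checks that the introduced signs cancel correctly.

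Finally I would supply the internal Hom and the adjunction. Define $\underline{\Hom}(D,E)$ by $\underline{\Hom}(D,E)_n=\prod_{i}\underline{\Hom}_{\mC V}(D_i,E_{i+n})$ (the products exist as $\mC V$ is complete) with differential $f\mapsto d_E\circ f-(-1)^n f\circ d_D$, which again satisfies $d^2=0$ and has the chain maps $D\to E$ as its degree-zero cycles. Applying the tensor–Hom adjunction of $\mC V$ degreewise yields a bijection between the data of a morphism $C\otimes D\to E$ and that of a morphism $C\to\underline{\Hom}(D,E)$. I expect the main obstacle to be the verification that these degreewise $\mC V$-adjunction isomorphisms are compatible with the differentials, that is, that a \emph{chain} map $C\otimes D\to E$ corresponds under the bijection to a \emph{chain} map $C\to\underline{\Hom}(D,E)$, so that $-\otimes D\dashv\underline{\Hom}(D,-)$ as endofunctors of $\Ch(\mC V)$. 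This amounts entirely to careful Koszul-sign accounting; once it is in place, naturality of the $\mC V$-adjunction upgrades the bijection to the required natural isomorphism, and symmetry makes this one adjunction suffice. Hence $\Ch(\mC V)$ is closed symmetric monoidal, completing the proof.
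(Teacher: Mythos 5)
Your proposal is correct and follows essentially the same route as the paper: the Koszul tensor product with unit $e$ concentrated in degree zero, the sign $(-1)^{ij}$ in the symmetry, the internal Hom $\prod_i[\,D_i,E_{i+n}]$ with differential $d_E\circ f-(-1)^nf\circ d_D$, and the degreewise tensor--Hom adjunction checked against the differentials. The only difference is that you prove the Grothendieck part directly (degreewise (co)limits, AB5, disk-complex generators $D^n(g_i)$), whereas the paper outsources exactly this to a citation of Al Hwaeer--Garkusha; your explicit isomorphism $\Hom_{\Ch(\mC V)}(D^n(g_i),C)\cong\Hom_{\mC V}(g_i,C_n)$ in fact yields the full generating property, not merely detection of zero objects.
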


\begin{proof}
Firstly, by~\cite[3.4]{AG} given \mC V Grothendieck, we have
that $\Ch(\mC V)$ is also Grothendieck.
It remains to define the closed symmetric monoidal structure on $\Ch(\mC V)$.
Denote the tensor product of \mC V by $\tens$ and its unit object by $e$.
Further denote the associativity isomorphism $a$,  the left
unitor isomorphism by $l$ and the right unitor by $r$ respectively. We
also assign $\swap$ to mean the symmetry isomorphism in \mC V.

Given $X,Y \in \Ch{\mC V}$ we define $X\Tens Y$ as the chain complex with entries 
   $$(X\Tens Y)_n := \bigoplus_{n=p+q} X_p\tens Y_q.$$ 
Throughout this proof we tacitly assume that the category $\Gr\cc V$ of $\mathbb Z$-graded
objects in $\cc V$ is closed symmetric monoidal. This follows from Day's theorem~\cite{Day}
and literally repeats~\cite[Example 4.5]{AG}. The differential
$d^{X\Tens Y}_n:(X\Tens Y)_n \to (X\Tens Y)_{n-1}$ determined by its action on
each summand as
   $$d^{X\Tens Y}_{(p,q)} : X_p\tens Y_q \to (X_{p-1}\tens Y_q)\oplus (X_p\tens Y_{q-1})$$
followed by inclusion into $(X\Tens Y)_{n-1}$ such that 
   $$d^{X\Tens Y}_{(p,q)} =\dtens{X}{p}{Y}{q}.$$ 
It does indeed define a chain complex as we see by
\begin{diagram*}[row sep=0.2em,column sep=5em]
             &                               & X_{p-2}\tens Y_q\\
			 &X_{p-1}\tens Y_q\ar[ru,"d^X_{p-1}\tens
			 \id^Y_q"]\ar[rd,"(-1)^{p-1}\id^X_{p-1} \tens d^Y_q" swap,near end] &\\
             &                               & X_{p-1}\tens Y_{q-1} \\
X_p\tens Y_q \ar[ruu,"d^X_p\tens \id^Y_q"]\ar[rdd,"(-1)^{p}\id^X_p \tens
d^Y_q" swap,near end]& &\\
             &                               & X_{p-1}\tens Y_{q-1}\\
			 &X_p\tens Y_{q-1}\ar[ru,"d^X_q\tens
			 \id^Y_{q-1}"]\ar[rd,"(-1)^{p}\id^X_{p} \tens d^Y_{q-1}" swap,near end] &\\
             &                               & X_p\tens Y_{q-2}\\			
\end{diagram*}
and thus, we are able to calculate
     $$d^{X\Tens Y}_{(p-1,q)} \circ d^{X\Tens Y}_{n}+ d^{X\Tens
     Y}_{(p,q-1)} \circ d^{X\Tens Y}_{n} = 0+(-1)^{p-1}d^X_p\tens
     d^Y_q +(-1)^{p}d^X_p\tens d^Y_q + 0= 0$$
for every $p,q \in \bB Z,$ hence $d\circ d = 0.$

Next, given chain maps $f:X\to X'$ and $g:Y\to Y'$, we define 
   $${{(f\odot g)_n} :=\bigoplus_{n=p+q} f_p\tens g_q}$$ 
for each component $n \in \bB Z$ and then consider the following diagram
\begin{diagram*}[column sep=10em] \dar["f_p\tens g_q"] X_p\tens Y_q \rar["\dtens{X}{p}{Y}{q}"]&(X_{p-1}\tens
Y_q)\oplus (X_p\tens Y_{q-1})\dar["(f_{p-1}\tens g_q)\oplus (f_p\tens g_{q-1})"]\\
X'_p\tens Y'_q \rar["\dtens{X'}{p}{Y'}{q}"]&(X'_{p-1}\tens Y'_q)\oplus (X'_p\tens Y'_{q-1})
\end{diagram*}%
which commutes on each summand for all choices $p,q$, hence $f\Tens g$ is
consistent with the differential and as $\Tens$ is clearly a functor on graded
objects, we can thus conclude that $\Tens$ is a bifunctor $\Ch(\mC V)\times \Ch
(\mC V)\to \Ch(\mC V).$

Now we are in a position to define our structure isomorphisms. Given chain
complexes $X,Y,Z\in \Ch(\mC V) $ we define an associativity isomorphism
   $$\alpha : (X\Tens Y)\Tens Z \to X\Tens (Y\Tens Z).$$ 
For $n \in \bB Z$ we define
$\alpha_n = \bigoplus_{n=i+j+k} a^{X_i,Y_j,Z_k}$, where
$a^{X_i,Y_j,Z_k}: (X_i\tens Y_j)\tens Z_k \to X_i\tens (Y_j\tens Z_k)$ is
the component of the natural associativity isomorphism in \mC V. Since we know that $a$
is a natural isomorphism, a direct sum of its components is also a natural
isomorphism, and further we can say that this $\alpha$ will
satisfy the relevant coherence conditions as it will hold at each degree.
However, we need to check that these $\alpha_n$ give a chain map, i.e. these are consistent with the differential. We have:
\begin{align*}
d^{X\Tens (Y \Tens Z)}_{(i,j,k)} &= \dtens{X}{i}{Y\Tens Z}{j+k}\\
&=d^{X}_{i}\tens\id^{Y\Tens Z}_{j+k} +(-1)^{i}
\id^{X}_{i}\tens (\dtens{Y}{j}{Z}{k})\\
&= d^{X}_{i}\tens\id^{Y\Tens Z}_{j+k} +(-1)^{i}
\id^{X}_{i}\tens(d^Y_j\tens \id^Z_k) + (-1)^{i+j}\id^X_i\tens(\id^Y_j\tens
d^Z_k)\\
&= d^{X}_{i}\tens(\id^{Y}_{j}\tens\id^Z_k) +(-1)^{i}
\id^{X}_{i}\tens (d^Y_j\tens \id^Z_k ) + (-1)^{i+j}\id^X_i\tens(\id^Y_j\tens
d^Z_k)
\end{align*}
and
\begin{align*}
d^{(X\Tens Y) \Tens Z}_{(i,j,k)} &= \dtens{X\Tens Y}{i+j}{Z}{k}\\
&=(\dtens{X}{i}{Y}{j})\tens \id^Z_k +(-1)^{i+j}\id^{X\Tens Y}_{i+j}\tens
d^Z_k\\
&= (d^{X}_{i}\tens\id^{Y}_{j})\tens\id^Z_k +(-1)^{i}
(\id^{X}_{i}\tens d^Y_j)\tens \id^Z_k  + (-1)^{i+j}\id^{X\Tens Y}_{i+j}\tens
d^Z_k\\
&= (d^{X}_{i}\tens\id^{Y}_{j})\tens\id^Z_k +(-1)^{i}
(\id^{X}_{i}\tens d^Y_j)\tens \id^Z_k  + (-1)^{i+j}(\id^X_i\tens\id^Y_j)\tens
d^Z_k
\end{align*}
which agree up to a change of brackets (i.e. by applying $a^{X_i,Y_j,Z_k}$), 
hence $\alpha_{n}$-s give a chain map.

We define a unit object for our new tensor product, which we denote by
$\varepsilon$ as being the chain complex with $e$ in zeroth degree and $0$ in
every other degree and note that
$$(X\Tens \varepsilon)_n = X_n \tens e \text{\quad and
\quad}d^{X\Tens\varepsilon}_n = d^X_n \tens \id_e$$ for all $n\in \bB Z$ as
tensoring with zero is zero and a direct sum is unchanged by adding zeros.
Thus we define $\rho^X$ with $\rho^X_n = r^{X_n}$, the fact
that this is a chain map follows directly from the naturality of $r$ and
moreover is itself a natural transformation in $\Ch(\mC V)$. Coherence
conditions for the right unitor are satisfied at each degree by properties of
\mC V, hence hold in $\Ch(\mC V)$.

Similarly, note that
$$(\varepsilon\Tens Y)_n = e \tens Y_n \text{\quad and \quad}d^{
\,\varepsilon\Tens Y}_n = \id_e \tens d^Y_n$$ and hence define the left unitor
$\lambda^Y$ as $\lambda^Y_n = l^Y_n$ which satisfies the relevant conditions by a similar argument.

Next, consider chain complexes $X,Y \in \Ch(\mC V)$. We want to define a map
$$\sigma^{X,Y} : X\Tens Y\to Y\Tens X.$$ We shall consider this map to
consist of $\sigma^{X,Y}_n : \bigoplus_{n=p+q} X_p\tens Y_q \to
\bigoplus_{n=q+p} Y_q\tens X_p$ for $n\in \bB Z$. They are completely determined by its action
on each summand $$\sigma^{X,Y}_{(p,q)}:X_p\tens Y_q\to Y_q\tens X_p,$$
and these we define to be 
   $$\sigma^{X,Y}_{(p,q)} =(-1)^{pq}\swap^{X_p,Y_q},$$ 
the components of the symmetry isomorphism in \mC V
multiplied by $(-1)^{pq}$. Such a map is a natural isomorphism as it is in \mC V, and satisfies
the coherence conditions as it will do so on each component. However, we need to
check if this map is indeed a chain map. This is demonstrated by the following
commutative diagram:
\begin{diagram*}[column sep=10em]
\dar["(-1)^{pq}\swap^{X_p,Y_q}"] X_p\tens Y_q
\rar["\dtens{X}{p}{Y}{q}"]&(X_{p-1}\tens Y_q)\oplus (X_p\tens
Y_{q-1})\dar["((-1)^{(p-1)q}\swap^{X_{p-1},Y_q})\oplus
((-1)^{p(q-1)}\swap^{X_p,Y_{q-1}})",shift right=3em]
\\
Y_q\tens X_p \rar["(-1)^{q}
\id^{Y}_{p}\tens d^{X}_{q} + d^{Y}_{q}\tens\id^{X}_{p}" ]&({Y}_q\tens
{X}_{p-1})\oplus ({Y}_{q-1}\tens {X}_p). \end{diagram*}%

Thus $\Ch(\mC V)$ is symmetric monoidal and Grothendieck. We next define
an internal Hom-object $\underline\Hom(X,Y)$ for $X,Y \in \Ch(\mC V),$ as having in each
degree $n\in \bB Z$ 
   $$\underline\Hom(X,Y)_n := \prod_{p} [X_{p},Y_{p+n}],$$
where $[X_{p},Y_{p+n}]:=\cc V(X_{p},Y_{p+n})$.
To define its differential $d^{\underline\Hom(X,Y)}_n:\underline\Hom(X,Y)_n\to
\underline\Hom(X,Y)_{n-1}$,
it is enough to define this map to each factor by first projecting onto $p$ and
$p-1$, and then one sets
   $$d^{\underline\Hom(X,Y)}_{(p,n)}:[X_p,Y_{p+n}]\times [X_{p-1},Y_{p+n-1}]
      \to [X_{p},Y_{p+n-1}]$$ 
to be
   $$d^{\underline\Hom(X,Y)}_{(p,n)}=\dHom{X}{p}{Y}{n}.$$
Again, we need to verify that this defines a differential. We check this on 
each factor $p$ and $p-1,$ using the following diagram
\begin{diagram*}[row sep=0.2em,column sep=5em]
\lbrack X_p,Y_{p+n}       \rbrack \ar[dr,"{[\id^{X}_{p}\!,d^{Y}_{p + n}]}" ] &                               &\\
			                &\lbrack X_{p},Y_{p+n-1} \ar[ddr,"{[\id^{X}_{p}\!,d^{Y}_{p + n-1}]}" near start] \rbrack &\\
\lbrack X_{p-1},Y_{p-1+n} \rbrack \ar[ur,"{-(-1)^{n}[d^{X}_{p}\!,\id^{Y}_{p + n - 1}]}" swap,near start]         &                               &\\
                            &                                                             & \lbrack X_p,Y_{p+n-2}\rbrack\\
\lbrack X_{p-1},Y_{p-1+n} \rbrack \ar[dr,"{[\id^{X}_{p-1},d^{Y}_{p-1 + n}]}" ] &                               &\\
			                &\lbrack X_{p-1},Y_{(p-1)+n-1}\ar[uur,"{-(-1)^{n-1}[d^{X}_{p}\!,\id^{Y}_{p + n -2}]}" swap] \rbrack &\\
\lbrack X_{p-2},Y_{p-2+n} \rbrack \ar[ur,"{-(-1)^{n}[d^{X}_{p-1},\id^{Y}_{p + n -2}]}" swap,near start] &                               &\\ 	
\end{diagram*}%
We have
\begin{align*}
&{[\id^{X}_{p}\!,d^{Y}_{p + n-1}]}\circ (\dHom{X}{p}{Y}{n})\\
&\quad=0-(-1)^{n}[d^X_p\!,d^Y_{p+n-1}]\\
&-(-1)^{n-1}[d^{X}_{p}\!,\id^{Y}_{p + n -
2}]\circ (\dHom{X}{p-1\,}{Y}{n})\\
&\quad= -(-1)^{n-1}[d^X_p\!,d^Y_{p+n-1}]+0
\end{align*}
which sums to zero. Hence $d\circ d = 0$ and $\underline\Hom(X,Y)$ is a chain complex. 

To define a closed structure on $\Ch(\mC V)$, it is necessary
that $\underline\Hom(X,Y)$ is functorial. It is apparent that the internal Hom-object of \mC V and the
product are functors on graded objects. We need only to check consistency with
differentials. Given $f':X'\to X$ and $g:Y\to Y'$,
define $\underline\Hom(f',g)_n := \prod_p [{f'\!}_p,g_{p+n}]$ at each degree $n
\in \bB Z$ and then consider the following commutative diagram \begin{diagram*}[column sep=12em]
\dar["{[ f'_{p},g_{p+n}]}\times {[ f'_{p-1},g_{p-1+n}]}"] \lbrack
X_p,Y_{p+n}\rbrack\times \lbrack
X_{p-1},Y_{p-1+n}\rbrack \rar["\dHom{X}{p}{Y}{n}"]&\lbrack
X_p,Y_{p-1+n}\rbrack
\dar["{[ f'_p,g_{p-1+n}]}"]
\\
\lbrack
X'_p,Y'_{p+n}\rbrack\times \lbrack
X'_{p-1},Y'_{p-1+n}\rbrack \rar["\dHom{X'\!}{p}{Y'}{n}",swap] & \lbrack X'_p,
Y'_{p-1+n}\rbrack \end{diagram*}%

Lastly, we need to make sure that our definition of the internal Hom-object of chain
complexes satisfies the following isomorphism
$$\varphi:\Hom(X\Tens Y,Z)\cong \Hom(X,\underline\Hom(Y,Z)),$$
natural in $X,Y,Z \in \Ch(\mC V)$.

Given a chain map $k:X\Tens Y \to Z$, we know that this is uniquely
determined on each degree by maps on each
summand. The collection of maps  $k_{(p,q)}: X_p\tens Y_q \to Z_{p+q}$
 for all $p,q\in \bB Z$ determines $k$ uniquely. Using the closed structure of
\mC V, we can derive a collection $\phi(k_{(p,q)}): X_p
\to [Y_q , Z_{p+q}]$, where $\phi(k_{(p,q)})$ is the adjunction map in $\mathcal V$ corresponding to $k_{(p,q)}$, 
which is sufficient information to define maps
$$\varphi(k)_p:X_p \to \prod\limits_q [Y_q, Z_{p+q}]$$ and construct
$\varphi(k): X \to \underline\Hom(Y,Z)$ as $(\varphi(k)_p)_{p\in \bB Z}.$
Thus we have established a one-to-one correspondence between $k$ and $\varphi (k).$
As usual, we need to check that this identification is compatible with the
differentials. More precisely, let us check that $\varphi(k)$ is a morphism of complexes. We know
that $k \in \Hom(X\Tens Y,Z)$ if and only if for all integers $p,q$

\begin{diagram*}[column sep=10em]
\dar["\dtens{X}{p}{Y}{q}"] X_p\tens Y_q
\rar["k_{(p,q)}"]&Z_{p+q}\dar["d^Z_{p+q}"]
\\
(X_{p-1}\tens Y_q)\oplus (X_p\tens
Y_{q-1}) \rar["k_{(p-1,q)}+k_{(p,q-1)}" swap]&Z_{p+q-1}
\end{diagram*}
commutes. Our adjunction in \mC V will lead to equalities
\begin{align*}
{[\id^Y_q,d^Z_{p+q}]}\circ\phi(k_{(p,q)})&=\phi(d^Z_{p+q}\circ k_{(p,q)})\\&=
\phi(k_{(p-1,q)}\circ (d^X_p \tens \id^Y_q) +(-1)^p k_{(p,q-1)}\circ (\id^X_p
\tens d^Y_q))\\&=
\phi(k_{(p-1,q)})\circ d^X_p + (-1)^p\,\phi(k_{(p,q-1)}\circ (\id^X_p
\tens d^Y_q))\\&=
\phi(k_{(p-1,q)})\circ d^X_p + (-1)^p\,{[d^Y_q,\id^Z_{p+q-1}]}\circ\phi(k_{(p,q-1)}).
\end{align*}
So we must have 
   $$\phi(k_{(p-1,q)})\circ d^X_p =
     {[\id^Y_q,d^Z_{p+q}]}\circ\phi(k_{(p,q)}) - (-1)^p\,{[d^Y_q,\id^Z_{p+q-1}]}\circ\phi(k_{(p,q-1)}).$$
If we write it as a commutative diagram, we get
\begin{diagram*}[column sep=10em]
\dar["d^{X}_{p}" swap] X_p
\rar["{\left(\phi(k_{(p,q)}),\,\phi(k_{(p,q-1)})\right)}"]&{[Y_q,Z_{p+q}]}\times
\lbrack Y_{q-1},Z_{p+q-1}\rbrack\dar["{[\id^Y_q\!,d^Z_{p+q}]}-
(-1)^p\,{[d^Y_q\!,\id^Z_{p+q-1}]}" swap] \\X_{p-1}
\rar["\phi(k_{(p-1,q)})" swap ]&{[Y_q,Z_{p+q-1}]}
\end{diagram*}%
for all integers $p,q$. In other words, $\varphi(k)$ is a chain map
if and only if so is $k$, as required. 

Next, we have to determine whether our identification is natural. Consider maps $f : X \to
X',\,g:Y\to Y'\text{ and }h:Z' \to Z,$ in $\Ch(\mC V)$ and ${k:X'\odot Y'\to
Z'}.$ Given that two chain maps are equal if and only if they are equal on each
degree, we fix a degree $n\in \bB Z$ and calculate
\begin{align*}
\varphi(h \circ k \circ (f \odot g))_n &= \prod_{q} \phi(h_{n+q}\circ k_{(n,q)} \circ (f_n \tens g_q))\\
&= \prod_q [g_q,h_{n+q}] \circ \phi(k_{(n,q)})\circ f_n\\
&= \underline\Hom{(g,h)_n}\circ \varphi(k)_n \circ f_n\\
&= (\underline\Hom{(g,h)}\circ \varphi(k) \circ f)_n
\end{align*}
Thus we have the desired naturality. We also have automatically that these
isomorphisms are additive, and hence an adjunction in the Grothendieck category $\Ch(\mC V)$.

We conclude that if \mC {V} is a closed symmetric monoidal Grothendieck
category, then the category of chain complexes $\Ch{(\mC V)}$
is closed symmetric monoidal Grothendieck with the structure detailed above, as was to be proved.
\end{proof}

The preceding theorem leads to the following natural definition. 

\begin{definition}
A category $\cc C$ enriched over $\Ch(\cc V)$ is said to be a {\it differential graded $\cc V$-category\/} or just
a DG {\it $\cc V$-category}. $\cc C$ is {\it small\/} if its objects form a set.
Ordinary DG-categories are recovered as DG $\cc V$-categories with $\cc V=\Ab$.

The {\it category of differential graded $\cc V$-modules\/} or just DG {\it $\cc V$-modules\/} is the category $[\cc C,\Ch(\cc V)]$ 
of enriched functors from a DG $\cc V$-category $\cc C$ to $\Ch(\cc V)$. Ordinary DG-modules over a DG-category
are recovered as DG $\cc V$-modules with $\cc V=\Ab$.
\end{definition} 

Given any complete closed symmetric monoidal category $\cc V$ and any small $\cc V$-category $\cc C$, 
$[\cc C,\cc V]$ is a closed $\cc V$-module by Lemma~\ref{closedmod}. We write $\oslash$ for the corresponding functor 
$[\cc C,\cc V]\otimes\cc V\to[\cc C,\cc V]$.

\begin{cor}
Given a closed symmetric monoidal Grothendieck category $\cc V$ with a family of generators
$\{g_i\}_I$ and small differential graded $\cc V$-category $\cc C$, the category of 
differential graded $\cc V$-modules $[\cc C,\Ch(\cc V)]$ is Grothendieck with the
set of generators $\{\Ch(\cc V)_{\cc C}(c,-)\oslash D^ng_i\mid c\in\cc C,i\in I,n\in\mathbb Z\}$,
where each $D^ng_i\in\Ch(\cc V)$ is the complex which is $g_i$ in degree $n$ and $n -1$ and $0$
elsewhere, with interesting differential being the identity map.
\end{cor}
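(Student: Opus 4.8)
The plan is to reduce the statement to the theorem of~\cite{AG} recalled in the introduction, applied with the base category $\cc V$ replaced by $\Ch(\cc V)$. By Theorem~\ref{ChVthm} the category $\Ch(\cc V)$ is again closed symmetric monoidal Grothendieck, and a small DG $\cc V$-category $\cc C$ is, by definition, precisely a small $\Ch(\cc V)$-category. Hence the cited result applies verbatim: for any generating set $\mathcal G$ of $\Ch(\cc V)$ it yields a generating set $\{\Ch(\cc V)_{\cc C}(c,-)\oslash h\mid c\in\cc C,\ h\in\mathcal G\}$ of $[\cc C,\Ch(\cc V)]$. Everything therefore comes down to showing that $\{D^ng_i\mid i\in I,\ n\in\mathbb Z\}$ is a family of generators of $\Ch(\cc V)$.

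The key computation is the natural isomorphism
$$\Hom_{\Ch(\cc V)}(D^ng_i,C)\cong\Hom_{\cc V}(g_i,C_n),\qquad C\in\Ch(\cc V),$$
which I would establish by unwinding the chain-map condition. A morphism $f:D^ng_i\to C$ has only two possibly nonzero components $f_n\colon g_i\to C_n$ and $f_{n-1}\colon g_i\to C_{n-1}$; since the differential of $D^ng_i$ is the identity between degrees $n$ and $n-1$, the compatibility relation forces $f_{n-1}=d^C_n\circ f_n$, so that $f$ is completely determined by $f_n$, and conversely every $f_n\colon g_i\to C_n$ extends to a chain map, the remaining relation being automatic from $d^C\circ d^C=0$. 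Thus a map out of $D^ng_i$ is exactly the datum of a map $g_i\to C_n$ in $\cc V$.

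With this representability in hand I would verify the generating property through subobjects. If $B\subsetneq C$ is a proper subcomplex in $\Ch(\cc V)$, then, because a subcomplex is determined by its graded pieces, there is a degree $n$ with $B_n\subsetneq C_n$ a proper subobject of $\cc V$. As $\{g_i\}_I$ generates $\cc V$, some morphism $h\colon g_i\to C_n$ does not factor through $B_n$; by the isomorphism above $h$ corresponds to a chain map $D^ng_i\to C$ with degree-$n$ component $h$, and this chain map cannot factor through $B$, for otherwise $h$ would factor through $B_n$. Hence $\{D^ng_i\}$ detects every proper subobject, i.e. it is a generating set, and combining this with the first paragraph completes the argument.

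I do not anticipate a serious obstacle; the one point needing care is the transition from ``$\{g_i\}$ generates $\cc V$'' to ``$\{D^ng_i\}$ generates $\Ch(\cc V)$'', and in particular phrasing the degreewise argument at the level of subobjects rather than merely as a statement about nonvanishing of morphism groups, so that it yields a genuine generating family. Once the representability isomorphism for the disk complexes is recorded, the remainder is formal.
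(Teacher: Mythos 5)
Your proposal is correct and follows essentially the same route as the paper: both reduce the statement to the generation theorem of~\cite{AG} applied to the closed symmetric monoidal Grothendieck category $\Ch(\cc V)$ furnished by Theorem~\ref{ChVthm}. The only difference is that you prove directly (and correctly, via the representability $\Hom_{\Ch(\cc V)}(D^ng_i,C)\cong\Hom_{\cc V}(g_i,C_n)$ and a degreewise subobject argument) that $\{D^ng_i\}$ generates $\Ch(\cc V)$, whereas the paper simply cites this from the proof of~\cite[3.4]{AG}.
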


\begin{proof}
By the preceding theorem  $\Ch(\cc V)$
is a closed symmetric monoidal Grothendieck category.
By the proof of~\cite[3.4]{AG} its
set of generators is given by the family of complexes $\{D^ng_i\mid i\in I,n\in\mathbb Z\}$.
Our statement now follows from~\cite[4.2]{AG}.
\end{proof}

\section{The enriched structure}\label{EnrStruct}

Suppose $\cc V$ is a closed symmetric monoidal Grothendieck category and $\cc C$ is a small
$\cc V$-category. In order to get some information about $\Ch[\cc C,\cc V]$, we shall identify this category 
with $[\cc C,\Ch(\cc V)]$ (see Theorem~\ref{sqbracket}) if we regard \mC C as trivially a
$\Ch(\mC V)$-category, where for each $a,b \in \mC C$ we define the chain $\Ch(\mC V)_\mC C(a,b)$ as having in
zeroth degree the \mC V-object $\mC V_\mC C(a,b)$ and zero in every other
degree. But first we need to collect some facts about $\Ch(\cc V)$.

It is known (see~\cite{Borceux}) that a closed symmetric monoidal category canonically carries
the structure of a category enriched over itself. It will be important
for us to describe the unit and composition morphisms in the case of $\Ch(\mC V)$ explicitly, using the unit
and composition morphisms belonging to \mC V.

We begin by describing the unit. Given $a\in \mC C$ and any $F\in [\mC
C,\Ch(\mC V)]$, the unit morphism $u_{F(a)}:\varepsilon \to \underline\Hom(F(a),F(a))$,
where $\varepsilon$ is the unit object of the tensor product on $\Ch(\cc V)$ defined in the proof of Theorem~\ref{ChVthm},
reduces to a single morphism in degree
zero, $u_{F(a)}:e\to \prod_p[F(a)_p,F(a)_p]$ with $e$ the unit object of the tensor product on $\cc V$. Moreover, 
$u_{F(a)} =(u_{F(a)_p})_{p\in \bB Z}:e\to
\prod_p[F(a)_p,F(a)_p]$ where $u_{F(A)_p}$ is nothing but the unit morphism in \mC V
associated to the \mC V-object $F(a)_p$ for $p\in\bB Z.$

Next, in order to describe the composition morphism,
we need to first understand the evaluation in $\Ch(\mC V)$ in terms of
evaluation in \mC V. Thus we consider
$A,B\in \Ch(\mC V),$ and denote ${\underline{\ev}_{A,B}:\underline\Hom(A,B)\Tens
A\to B}$ the evaluation morphism in $\Ch(\mC V)$. This evaluation
morphism is defined to be adjunct to the identity morphism on
$\underline\Hom(A,B).$ Hence, we are able to calculate this morphism explicitly
by maps in \mC V as follows. Consider the projection maps
   $$\pr_{s,s+t}: \prod_p [A_p,B_{p+t}]\to [A_s,B_{s+t}],\quad s,t\in \bB Z.$$
We can calculate $\underline{\ev}_{A,B}$ by applying the adjunction $\phi^{-1}$
in \mC V (see Theorem~\ref{ChVthm}). We have then that
   $$(\underline{\ev}_{A,B})_n = \bigoplus_{s+t=n} \phi^{-1}(\pr_{s,s+t}):
\bigoplus_{s+t=n}\bigg(\prod_p [A_p,B_{p+t}]\bigg)\tens A_s\to B_{s+t}$$
Next, consider the following commutative diagram

 \begin{diagram*}[column sep=5em]
\dar["\pr_{s,s+t}"]
\prod_p{[A_p,B_{p+t}]}
\rar["\pr_{s,s+t}"]&{[A_s,B_{s+t}]}\dar["{[\id,\id]}"]
\\
{[A_s,B_{s+t}]}
\rar["\id"]&{[A_s,B_{s+t}]}
\end{diagram*}%
and apply the adjunction in \mC V, and deduce the following commutative diagram

\begin{diagram*}[column sep=7em]
\dar["\pr_{s,s+t}\tens \id"]
\big(\prod_p{[A_p,B_{p+t}]}\big)\tens A_s
\rar["\phi^{-1}(\pr_{(s,s+t)})"]&B_{s+t}\dar["\id"]
\\
{[A_s,B_{s+t}]}\tens A_s
\rar["\ev_{A_s,B_{s+t}}"]&B_{s+t},
\end{diagram*}%
where $\ev_{A_s,B_{s+n}}$ is the evaluation morphism in \mC V. Thus$$
(\underline{\ev}_{A,B})_n=\bigoplus_{s+t=n} \phi^{-1}(\pr_{(s,s+t)})=
\bigoplus_{s+t=n} \ev_{A_s,B_{s+t}}\circ(\pr_{s,s+t}\tens \id).$$
We are now in a position to describe the composition morphism
   $$\underline\Hom(A,B)\Tens\underline\Hom(B,C)\to \underline\Hom(A,C)$$
 explicitly. Following Borceux \cite[Diagram 6.6]{Borceux} this map is defined
 to be adjoint to the composite $$\underline{\ev}_{B,C}\circ \sigma \circ (\underline{\ev}_{A,B}\Tens 1)\circ
(\id_{\underline\Hom(A,B)}\Tens
\sigma):\underline\Hom(A,B)\Tens\underline\Hom(B,C)\Tens A \to C , $$
where $\sigma$ is the swapping chain isomorphism described in the proof of Theorem~\ref{ChVthm}.
Furthermore, this composite at degree $n\in \bB Z$ is determined by a
collection of morphisms with $r+p+q=n$,
   $$(\underline\ev_{B,C})_{r+p+q}\circ(-1)^{(r+p)q}\swap\circ\big((\underline\ev_{A,B})_{r+p}\tens\id\big)\circ\big(\left(-1\right)^{qr}\swap\tens\id\big).$$
Using our description of evaluation we may consider the following
diagram, where the rightmost path is any morphism from the collection above.

\footnotesize
\begin{diagram*}[row sep=6.5em,column sep=huge]
\prod_i {[A_i,B_{i+p}]} \tens \prod_j {[B_j,C_{j+q}]} \tens A_r
\dar["\pr\tens \id\tens\id
"]\drar["\id\tens (-1)^{qr}\swap"] &\\
{[A_r,B_{r+p}]} \tens \prod_j {[B_j,C_{j+q}]} \tens A_r
\dar["\id\tens\pr\tens\id"]\drar["\id\tens(-1)^{qr}\swap"]
&\prod_i{[A_i,B_{i+p}]} \tens  A_r \tens\prod_j {[B_j,C_{j+q}]}
\dar["\pr\tens\id\tens\id"]\\
{[A_r,B_{r+p}]} \tens {[B_{r+p},C_{r+p+q}]} \tens A_r
\dar["\id\tens(-1)^{qr}\swap"]
&{[A_r,B_{r+p}]} \tens A_r \tens \prod_j {[B_j,C_{j+q}]}
\dar["\ev\tens\id"]\dlar["\id\tens\id\tens\pr"]\\
{[A_r,B_{r+p}]} \tens A_r \tens {[B_{r+p},C_{r+p+q}]}
\dar["\ev\tens\id"]
& B_{r+p} \tens \prod_j {[B_j,C_{j+q}]}
\dar["(-1)^{(r+p)q}\swap"]\dlar["\id\tens\pr"]\\
B_{r+p} \tens {[B_{r+p},C_{r+p+q}]}
\dar["(-1)^{(r+p)q}\swap"]
&\prod_j {[B_j,C_{j+q}]} \tens B_{r+p}
\dlar["\pr\tens\id"]\\
{[B_{r+p},C_{r+p+q}] \tens B_{r+p} }\rar["\ev"]&C_{r+p+q}
\end{diagram*}%
\normalsize%
It is clear that this diagram is commutative, and we may take the
leftmost path and apply the adjunction in \mC V. Thus we are able to
conclude that the composition morphism in $\Ch(\mC V)$ at each degree $$(c_{\Ch \mC V})_n:\bigoplus_{p+q=n}\bigg(\prod_i [A_i,B_{i+p}]\tens
\prod_j[B_j,C_{j+q}]\bigg)\to \prod_r[A_r,C_{r+p+q}]$$ for $n\in \bB Z$
is determined by morphisms
\begin{equation}\label{CompEq}
(-1)^{pq}c_{A_{r},B_{p+r},C_{p+q+r}}\circ
(\pr_{[A_r,B_{p+r}]}\tens \pr_{[B_{p+r},C_{p+q+r}]}).\end{equation}
Hence, composition in $\Ch(\mC V)$ is the same as first taking
projections and then composing in \mC V, up to a sign $(-1)^{pq}=(-1)^{qr}(-1)^{(r+p)q}$.

\section{Identifying chain complexes with enriched functors} 

We shall work with a closed symmetric monoidal Grothendieck category \mC V, and consider a small \mC
V-category \mC C. It is evident that \mC C can be regarded as trivially a
$\Ch(\mC V)$-category, where for each $a,b \in \mC C$ we define the chain $\Ch(\mC V)_\mC C(a,b)$ as having in
zeroth degree the \mC V-object $\mC V_\mC C(a,b)$ and zero in every other
degree.

\begin{definition}\label{def}
Consider the trivial $\Ch(\cc V)$-enrichment on 
$\cc C$ introduced above. We define the enriched functor category $[\mC C,\Ch(\mC V)]$ as a category with
objects $\Ch(\mC V)$-functors $F:\mC C \to \Ch(\mC V)$ and the morphisms in 
$[\mC C,\Ch(\mC V)]$ are defined as $\Ch(\mC V)$-natural transformations.
\end{definition}

Note that for any $\Ch(\mC V)$-functor $F:\mC C \to \Ch(\mC V)$ and $a,b\in\cc C$,
$F_{a,b}:\Ch(\mC V)_\mC C(a,b)\to \underline\Hom(F(a),F(b))$ is, by definition, a morphism
in $\Ch(\cc V)$ of the form:\label{tochnee}\footnotesize
 $$\xymatrix{\cdot\cdot\cdot \ar[r] &0 \ar[r]^0 \ar[d]&
\mC V_\mC C(a,b) \ar[r]^0 \ar[d] &0 \ar[r] \ar[d]& \cdot\cdot\cdot \\
  \cdot\cdot\cdot \ar[r]&\underline\Hom(F(a),F(b))_1 \ar[r]^{\partial_1}
&\underline\Hom(F(a),F(b))_0 \ar[r]^{\partial_0} &\underline\Hom(F(a),F(b))_{-1} 
\ar[r]&\cdot\cdot\cdot}$$\normalsize
Using the definition of the complex $\underline\Hom(F(a),F(b))$ (see the proof
of Theorem~\ref{ChVthm}), we see that $F_{a,b}$
reduces to the single non-trivial map $$\mC V_\mC C(a,b)\to
\prod_p[F(a)_p,F(b)_p]$$ in degree $0$ with the property that
   \begin{equation}\label{referee}
    [\id^{F(a)}_{p}\!,d^{F(b)}_{p}]\circ
    (F_{a,b})_p-[d^{F(a)}_{p}\!,\id^{F(b)}_{p-1}]\circ (F_{a,b})_{p-1}=0
   \end{equation} 
for every $p\in \bB Z$. 

\begin{lemma}\label{Vnat}\cite[6.2.8]{Borceux}
Given any closed symmetric monoidal category $\cc V$ and \mC V-functors $X,Y:\mC C \to \mC V$, a \mC V-natural
transformation $\alpha:X\to Y$ can be defined as a collection of maps
$\alpha(a):
X(a) \to Y(a)$  in \mC V such that
\begin{diagram*}[column sep=7em]
\dar["Y_{a,b}"] \mC V_\mC C(a,b)
\rar["X_{a,b}"]&{[X(a),X(b)]}\dar["{[\id,\alpha(b)]}"]
\\
{[Y(a),Y(b)]}\rar["{[\alpha(a),\id]}"]
&{[X(a),Y(b)]}
\end{diagram*}%
commutes for all $a,b$ in \mC C.
\end{lemma}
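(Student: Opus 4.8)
The plan is to unwind Definition~\ref{trans} in the special case $\cc B=\cc V$, with $\cc V$ regarded as enriched over itself, and to recognise the resulting data and axiom as exactly the reformulation claimed. When $\cc B=\cc V$ the morphism object $\cc V_\cc V(c,d)$ is the internal hom $[c,d]$ and the composition morphism is the canonical internal composition $c\colon [c,d]\otimes[d,e]\to[c,e]$. So a $\cc V$-natural transformation $\alpha\colon X\To Y$ in the sense of Definition~\ref{trans} is a family of morphisms $\alpha_a\colon e\to[X(a),Y(a)]$ satisfying the naturality diagram there. First I would apply the tensor--hom adjunction $\Hom_\cc V(e\otimes X(a),Y(a))\cong\Hom_\cc V(e,[X(a),Y(a)])$ together with the unit isomorphism $l_{X(a)}\colon e\otimes X(a)\to X(a)$ to put the $\alpha_a$ in natural bijection with morphisms $\alpha(a)\colon X(a)\to Y(a)$ in $\cc V$; this matches the underlying collections of data in the two descriptions.

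The core of the argument is to translate the naturality axiom. Here I would use the standard descriptions of the internal hom functor on morphisms in terms of $c$ and the transposes $\alpha_a$: precomposition by $\alpha(a)$ is
\begin{equation*}
[\alpha(a),\id]=c\circ(\alpha_a\otimes\id)\circ l^{-1}\colon [Y(a),Y(a')]\to[X(a),Y(a')],
\end{equation*}
and postcomposition by $\alpha(a')$ is
\begin{equation*}
[\id,\alpha(a')]=c\circ(\id\otimes\alpha_{a'})\circ r^{-1}\colon [X(a),X(a')]\to[X(a),Y(a')].
\end{equation*}
Using the naturality of the unitors $l$ and $r$ to commute the factors $Y_{aa'}$ and $X_{aa'}$ past the inverse unitors, I would then factor the two legs of the naturality diagram of Definition~\ref{trans}. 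The left leg $c\circ(\alpha_a\otimes Y_{aa'})\circ l^{-1}$ becomes $[\alpha(a),\id]\circ Y_{aa'}$, and the right leg $c\circ(X_{aa'}\otimes\alpha_{a'})\circ r^{-1}$ becomes $[\id,\alpha(a')]\circ X_{aa'}$.

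Equality of the two legs is therefore precisely the commutativity of the square in the statement, with $b=a'$. Hence the naturality diagram of Definition~\ref{trans} holds if and only if $[\id,\alpha(b)]\circ X_{a,b}=[\alpha(a),\id]\circ Y_{a,b}$ for all $a,b\in\mC C$, which is the asserted characterisation. The main obstacle is the bookkeeping of the middle paragraph: one must first verify the two formulas for the internal hom functor on morphisms directly from the defining adjunction of the internal composition $c$ (equivalently from the evaluation morphisms of the self-enrichment of $\cc V$), and then apply the naturality squares of $l$ and $r$ with the correct placement of the factors. Every step is natural in $X$ and $Y$, so the resulting bijection between the two notions of $\cc V$-natural transformation is automatically natural as well.
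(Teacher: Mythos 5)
Your argument is correct: the paper offers no proof of this lemma, deferring entirely to Borceux~\cite{Borceux}, and your unwinding of Definition~\ref{trans} for the self-enriched target --- transposing $\alpha_a\colon e\to[X(a),Y(a)]$ to $\alpha(a)\colon X(a)\to Y(a)$, expressing $[\alpha(a),\id]$ and $[\id,\alpha(b)]$ via the internal composition, and sliding $X_{a,b}$, $Y_{a,b}$ past the unitors --- is precisely the standard argument behind the cited result. Nothing further is needed.
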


\begin{definition}
The category of chain complexes $\Ch[\mC C,\mC V]$ over the category of enriched functors
$[\mC C,\mC V]$ is defined as having objects $G$, consisting of
collections of \mC V-functors $G_n:\mC C \to \mC V$ and \mC V-natural
transformations $d^{G}_n: G_n \Rightarrow G_{n-1}$ for $n\in \bB Z$ with the
property that $d^2 = 0.$ This category is defined with morphisms $g:G\to G'$
being collections of \mC V-natural transformations $g_n:G_n\Rightarrow {G}'_n$
that commute with the differentials.
\end{definition}

We are now in a position to prove the main result of the section.

\begin{theorem}\label{sqbracket}
Let $\cc V$ be a closed symmetric monoidal Grothendieck category and $\cc C$ be a small $\cc V$-category.
Then the category $\Ch[\mC C,\mC V]$ is naturally isomorphic to the category $[\mC C,\Ch(\mC V)]$.
\end{theorem}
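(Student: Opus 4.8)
The plan is to exhibit mutually inverse functors between the two categories, making the isomorphism completely explicit. Since the underlying data of an object on either side is the same bundle of $\cc V$-objects, $\cc V$-morphisms and $\cc V$-structure maps, merely organised differently, the construction itself is a bookkeeping exercise; the content lies in checking that the defining axioms on one side translate exactly into those on the other.

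First I would build a functor $\Phi\colon\Ch[\cc C,\cc V]\to[\cc C,\Ch(\cc V)]$. Given $G=(G_n,d^G_n)$, for each $a\in\cc C$ let $\Phi(G)(a)$ be the complex in $\cc V$ with $\Phi(G)(a)_n=G_n(a)$ and differential $(d^G_n)(a)$, reading the $\cc V$-natural transformation $d^G_n$ through its components as in Lemma~\ref{Vnat}; the relation $d^2=0$ in $\Ch[\cc C,\cc V]$, which is checked componentwise, makes this a genuine complex. For the enrichment, since $\Ch(\cc V)_{\cc C}(a,b)$ is concentrated in degree zero, the structure morphism $\Phi(G)_{a,b}$ must, by the discussion preceding \eqref{referee}, reduce to a single map $\cc V_{\cc C}(a,b)\to\prod_p[\Phi(G)(a)_p,\Phi(G)(b)_p]$, and I would assemble it from the $\cc V$-functor structure maps by setting $(\Phi(G)_{a,b})_p=(G_p)_{a,b}$.

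The crucial verification is that this prescribed map is a morphism of complexes, i.e.\ satisfies \eqref{referee}. Writing \eqref{referee} out with $(\Phi(G)_{a,b})_p=(G_p)_{a,b}$ together with $d^{\Phi(G)(b)}_p=(d^G_p)(b)$ and $d^{\Phi(G)(a)}_p=(d^G_p)(a)$, one sees it is exactly the naturality square of Lemma~\ref{Vnat} asserting that $d^G_p\colon G_p\Rightarrow G_{p-1}$ is $\cc V$-natural; thus \eqref{referee} holds precisely because each $d^G_p$ is a $\cc V$-natural transformation. It then remains to confirm that $\Phi(G)$ satisfies the composition axiom \eqref{B6.11} and unit axiom \eqref{fun} for a $\Ch(\cc V)$-functor. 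Here I would invoke the explicit computations of Section~\ref{EnrStruct}: the unit of $\Ch(\cc V)$ lives in degree zero and equals the unit of $\cc V$ there, while by \eqref{CompEq} composition in $\Ch(\cc V)$ is projection followed by $\cc V$-composition up to the sign $(-1)^{pq}$. Because $\cc V_{\cc C}(a,b)$ sits in degree zero, only the summand $p=q=0$ contributes and the sign is trivial, so both axioms for $\Phi(G)$ collapse, degree by degree, to the composition and unit axioms for the individual $\cc V$-functors $G_n$, which hold by hypothesis.

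On a morphism $g=(g_n)\colon G\to G'$ I would set $\Phi(g)(a)_n=g_n(a)$; commutation of $g$ with differentials makes each $\Phi(g)(a)$ a chain map, and the $\Ch(\cc V)$-naturality square, obtained by applying Lemma~\ref{Vnat} with $\cc V$ replaced by the closed symmetric monoidal category $\Ch(\cc V)$ of Theorem~\ref{ChVthm}, reduces degreewise to the $\cc V$-naturality of each $g_n$; preservation of identities and composites is immediate from this degreewise formula. The inverse functor $\Psi\colon[\cc C,\Ch(\cc V)]\to\Ch[\cc C,\cc V]$ runs the same dictionary backwards: from $F$ one reads off $G_n(a)=F(a)_n$ with $(G_n)_{a,b}=\pr_n\circ(F_{a,b})_0$ and $(d^G_n)(a)=d^{F(a)}_n$, the $\cc V$-functor axioms for $G_n$ and the $\cc V$-naturality of $d^G_n$ being exactly the degree-$n$ shadow of the corresponding $\Ch(\cc V)$-data for $F$ (the naturality again furnished by \eqref{referee}). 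Since $\Phi$ and $\Psi$ are defined by the same correspondence in opposite directions, $\Psi\Phi=\id$ and $\Phi\Psi=\id$ hold on the nose. The main obstacle I anticipate is organisational rather than conceptual: keeping the reduction of the $\Ch(\cc V)$-composition and unit axioms to degree zero honest, in particular tracking the sign $(-1)^{pq}$ from \eqref{CompEq} and confirming that it genuinely vanishes once the degree-zero concentration of $\cc V_{\cc C}(a,b)$ is imposed, so that no spurious signs obstruct the identification.
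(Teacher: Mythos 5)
Your proposal is correct and follows essentially the same route as the paper: both construct explicit mutually inverse functors by reshuffling the same underlying data, using Lemma~\ref{Vnat} and equation~\eqref{referee} to match the chain-map condition on structure morphisms with the $\cc V$-naturality of the differentials, and using the degree-zero concentration of $\Ch(\cc V)_{\cc C}(a,b)$ together with the explicit composition formula~\eqref{CompEq} to collapse the enriched functor axioms degreewise. The paper presents the two directions as Steps 1--4 rather than as $\Phi$ and $\Psi$, but the verifications are the same.
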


\begin{proof}
We split the proof into several steps.

\underline{Step 1.} Given any $\Ch(\mC V)$-functor $F\in [\mC C,\Ch(\mC V)]$ we
can associate a chain complex ${G \in \Ch[\mC C,\mC V]}$ to $F$ in the following canonical way.

Firstly, we define
the objects that constitute $G$ as a collection of \mC V-functors $G_n:\mC C
\to \mC V$ such that ${G_n(c):=F(c)_n}.$ Further define the actions on
morphisms of these $G_n$ as maps $(G_n)_{a,b}:\mC V_\mC C (a,b)\to
[G_n(a),G_n(b)]$ being equal to the $n$-th factor of the only non-trivial
component of the map $F_{a,b}$, precisely the morphisms $(F_{a,b})_n:\mC V_\mC C
(a,b)\to [F(a)_n,F(b)_n]$, for each $a,b\in \mC C$ and $n\in \bB Z$ (see
p.~\pageref{tochnee}).

We are able to see that $G_n$ constitute valid \mC V-functors $\mC C \to \mC V$
because $F$ is a $\Ch(\mC V)$-functor if and only if

\begin{diagram*}\dar["F_{a,b}\Tens F_{b,c}"] \mC \Ch(\mC V)_\mC C (a,b)
\Tens \mC \Ch(\mC V)_\mC C (b,c)
\rar["c_{\Ch(\mC V)}"]&\Ch(\mC V)_\mC C (a,c)\dar["F_{a,c}"]
\\
\underline\Hom (F(a),F(b))\Tens \underline\Hom (F(b),F(c))
\rar["c_{\Ch(\mC V)}"]&\underline\Hom(F(a),F(c))
\end{diagram*}%
and
\begin{diagram*}
\varepsilon \rar["u_a"]\drar["u_{F(a)}" swap]&\Ch(\mC V)_\mC
C(a,a)\dar["F_{a,a}"]\\
& \underline\Hom(F(a),F(a)) \end{diagram*}%
commute in $\Ch(\mC V)$ for all $a,b,c\in \mC C.$ This reduces to
the following diagrams 
\begin{diagram*}[column sep=huge]
\dar[" F_{a,b}\tens F_{b,c}"] \mC V_\mC C (a,b) \tens
\mC V_\mC C (b,c)
\rar["c_\mC V"]&\mC V_\mC C (a,c)\dar["F_{a,c}"]
\\
\prod_p{[F(a)_p,F(b)_{p}]}\tens \prod_p{[F(b)_p,F(c)_{p}]}
\rar["c"]&\prod_p{[F(a)_p,F(c)_{p}]}
\end{diagram*}%
and
\begin{diagram*}
e\rar["u_a"]\drar["u_{F(a)}" swap]&\mC V_\mC
C(a,a)\dar["F_{a,a}"]\\
& \prod_p[F(a)_p,F(a)_p]\end{diagram*}%
 in \mC V for all $a,b,c\in \mC C $ and every $p\in \bB Z.$
Here $c$ is the map determined by the collection of morphisms
   $$c_{F(a)_p,F(b)_p,F(c)_p}\circ (\pr_{[F(a)_p,F(b)_p]}\tens\pr_{[F(b)_p,F(c)_p]}),$$ 
with $p\in\bB Z,$  as detailed in the previous
section by~\eqref{CompEq}. Therefore, we have that commutativity of
those diagrams is equivalent to commutativity of the following diagrams in \mC V
\begin{diagram*}\dar[" (G_p)_{a,b}\tens (G_p)_{b,c}"] \mC V_\mC C (a,b) \tens \mC V_\mC C (b,c)
\rar["c_\mC V"]&\mC V_\mC C (a,c)\dar["(G_p)_{a,c}"]
\\
{[G_p(a),G_p(b)]}\tens {[G_p(b),G_p(c)]}
\rar["c_\mC V"]&{[G_p(a),G_p(c)]}
\end{diagram*}%
and
\begin{diagram*}
e\rar["u_a"]\drar["u_{G_p(a)}" swap]&\mC V_\mC
C(a,a)\dar["(G_p)_{a,a}"]\\
& {[G_p(a),G_p(a)]}\end{diagram*}%
 for all $a,b,c\in \mC C $ and every $p\in \bB Z$. We see that
$G_p$ are \mC V-functors.

Next, define the differential of $G$ as the \mC V-natural transformations
$d^G_n:G_n \Rightarrow G_{n-1}$ associated with the collection of maps
$d^G_n(a):= d^{F(a)}_n$ using Lemma \ref{Vnat}. Furthermore $F_{a,b}$
is such that $\Ch(\mC V)_\mC C(a,b)\to \underline\Hom (F(a),F(b))$ is a chain
 map, equivalently that $d^{\underline\Hom(Fa,Fb)}_0 \circ F_{a,b}=0$, for all
$p\in \bB Z.$ By~\eqref{referee} we have that
   $$ {[\id^{F(a)}_{p}\!,d^{F(b)}_{p}]\circ (F_{a,b})_p=[d^{F(a)}_{p}\!,\id^{F(b)}_{p - 1}]}\circ (F_{a,b})_{p-1}.$$ 
This is the same as saying that
\begin{diagram*}[column sep=huge]
\mC V_\mC
C(a,b)\rar["(G_p)_{a,b}"]\dar["(G_{p-1})_{a,b}"
swap]&{[G_p(a),G_p(b)]}\dar["{[\id^{F(a)}_{p}\!,d^{F(b)}_{p}]}"]\\
{[G_{p-1}(a),G_{p-1}(b)]}\rar["{[d^{F(a)}_{p}\!,\id^{F(b)}_{p -
1}]}"]& {[G_p(a),G_{p-1}(b)]}\end{diagram*}%
commutes for all $p\in \bB Z,$ hence  $d^G_n$
define \mC V-natural transformations. This defines a valid differential as
$d^G_n(a)\circ d^G_{n+1}(a)$ is determined by $d^{F(a)}_n\circ
d^{F(a)}_{n+1} = 0$ for all $a\in \mC C$ and $n\in \bB Z.$ Thus we have
associated to $F\in [\mC C,\Ch(\mC V)]$ a chain complex ${G \in
\Ch[\mC C,\mC V]}.$

\underline{Step 2.} Now given any $\Ch(\mC V)$-natural transformation in $[\mC
C,\Ch(\mC V)]$ we associate a chain map in $\Ch[\mC C, \mC V]$ in the following
canonical way.

Given $f:F\To F'$ with $F,F' \in [\mC C,\Ch(\mC V)]$, we can associate a
chain map $g:G\to G'$ where $G,G'\in \Ch[\mC C, \mC V]$ are the chain complexes
of Step~1 associated to the respective functors $F$ and $F'$. Using Lemma
\ref{Vnat} we can determine $f$ by a family of maps
 $f(a):F(a)\to F'(a) \in \Ch(\mC V)$ such that for all ${a\in \mC C}$ the
 following square commutes

\begin{diagram*}[column sep=huge]
\mC \Ch(\mC V)_\mC
C(a,b)\rar["F_{a,b}"]\dar["{F'}_{a,b}"
swap]&{\underline\Hom{(F(a),F(b))}}\dar["\underline\Hom{(\id_a,f(b))}"]\\
\underline\Hom{(F'(a),F'(b))}\rar["\underline\Hom{(f(a),\id_b)}"]&
{\underline\Hom{(F(a),F'(b)).}}\end{diagram*}%
As above, it reduces to commutativity of
\begin{diagram*}[column
sep=huge] \mC V_\mC
C(a,b)\rar["F_{a,b}"]\dar["{F'}_{a,b}"
swap]&{\prod_p[F(a)_p,F(b)_p]}\dar["{\prod_p[\id,f(b)_p]}"]\\
{\prod_p[F'(a)_p,F'(b)_p]}\rar["{\prod_p[f(a)_p,\id]}"]&
{\prod_p[F(a)_p,F'(b)_p]}\end{diagram*}%
for all $a,b \in \mC C$ and $p\in \bB Z$. Thus we define $g_p(a):=f(a)_p$ and see that

\begin{diagram*}[column
sep=huge] \mC V_\mC
C(a,b)\rar["\prod_p (G_p)_{a,b}"]\dar["\prod_p({G'}_p)_{a,b}"
swap]&{\prod_p[G_p(a),G_p(b)]}\dar["{\prod_p[\id,g_p(b)]}"]\\
{\prod_p[G'_p(a),G'_p(b)]}\rar["{\prod_p[g_p(a),\id]}"]&
{\prod_p[G_p(a),G'_p(b)]}\end{diagram*}%
commutes. Hence $g_p$ defined in this manner are \mC V-natural
transformations by Lemma \ref{Vnat}.
Further the graded map $g:=(g_p)_{p\in \bB Z}$ is in fact a map of chain complexes, because
$(g_{p-1})(a) \circ d^G_p(a)=f(a)_{p-1} \circ d^{F(a)}_p = d^{F'(a)}_{p-1}\circ
f(a)_{p} = d^{G'}_{p-1}(a)\circ (g_p)(a).$ Therefore, we have associated
to a map $f:F\To F',$ with $F,F' \in [\mC C,\Ch(\mC V)],$ a chain
map $g:G\to G'$ where $G,G'\in \Ch[\mC C, \mC V]$ are those chain complexes associated to
the functors $F$ and $F'$ respectively. 

\underline{Step 3.} Given any chain complex ${G \in \Ch[\mC C,\mC V]}$ we can
associate a $\Ch(\mC V)$-functor $F\in [\mC C,\Ch(\mC V)]$ to $G$ in the following
canonical way.

Firstly, we define an action on objects by $F:\mC C \to \Ch(\mC V)$. Given $c\in \mC C$ define a
chain complex $F(c)$ with components $F(c)_n := G_n(c)$ equipped
with a differential $d^{F(c)},$ defined by
components $d_n^{F(c)}:= d^G_n{(c)}.$ This is a valid chain complex as
$d^G_n{(c)}\circ d^G_{n+1}{(c)}= 0$ for all $n\in \bB Z$ and $c\in \mC C.$ Next
we define an action on morphisms. Given objects $a,b \in\mC C$ we define the
chain map $F_{a,b}:\Ch(\mC V)_\mC C(a,b)\to\underline\Hom(F(a),F(b))$ as
follows. Since $\Ch(\mC V)(a,b)$ is concentrated in degree zero, the desired
structure map is fully determined by $\mC V_\mC
C(a,b)\to\prod_p[F(a)_p,F(b)_p]$ being the maps $(G_p)_{a,b}:\mC V_\mC
C(a,b)\to[F(a)_p,F(b)_p].$ For this to be a valid chain map we must satisfy the
following relation
 \begin{align*}
 & [\id^{F(a)}_{p}\!,d^{F(b)}_{p}]\circ
 (F_{a,b})_p-[d^{F(a)}_{p}\!,\id^{F(b)}_{p-1}]\circ (F_{a,b})_{p-1}=\\
 &= [\id^{G_p(a)}\!,d^{G}_p(b)]\circ
 (G_p)_{a,b}-[d^{G}_{p}(a),\id^{G_{p-1}(b)}]\circ (G_{p-1})_{a,b} =0
 \end{align*} 
for every $p\in \bB Z.$ This relation indeed holds by Lemma~ \ref{Vnat} 
as $d^G_p$ are \mC V-natural transformations. Moreover, we must
verify the enriched composition and unit laws for $F$ to be a $\Ch(\mC
V)$-functor. This is, more precisely, establishing the commutativity of the
following diagrams
 \begin{diagram*}\dar["F_{a,b}\Tens F_{b,c}"] \mC \Ch(\mC V)_\mC C (a,b)
\Tens \mC \Ch(\mC V)_\mC C (b,c)
\rar["c_{\Ch(\mC V)}"]&\Ch(\mC V)_\mC C (a,c)\dar["F_{a,c}"]
\\
\underline\Hom (F(a),F(b))\Tens \underline\Hom (F(b),F(c))
\rar["c_{\Ch(\mC V)}"]&\underline\Hom(F(a),F(c))
\end{diagram*}%
and
\begin{diagram*}
\varepsilon \rar["u_a"]\drar["u_{F(a)}" swap]&\Ch(\mC V)_\mC
C(a,a)\dar["F_{a,a}"]\\
& \underline\Hom(F(a),F(a)) \end{diagram*}%
for all $a,b,c\in \mC C$ (see Step 1). By definition of $F$ we see that these
commute if and only if, \begin{diagram*}[column sep=huge]
\dar[" F_{a,b}\tens F_{b,c}"]
\mC V_\mC C (a,b) \tens \mC V_\mC C (b,c)
\rar["c_\mC V"]&\mC V_\mC C (a,c)\dar["F_{a,c}"]
\\
\prod_p{[F(a)_p,F(b)_{p}]}\tens \prod_p{[F(b)_p,F(c)_{p}]}
\rar["c"]&\prod_p{[F(a)_p,F(c)_{p}]}
\end{diagram*}%
and
\begin{diagram*}
e\rar["u_a"]\drar["u_{F(a)}" swap]&\mC V_\mC
C(a,a)\dar["F_{a,a}"]\\
& \prod_p[F(a)_p,F(a)_p]\end{diagram*}%
commute in \mC V for all $a,b,c\in \mC C $ and every $p\in \bB Z$. It follows that these diagrams
commute if and only if \begin{diagram*}\dar[" (G_p)_{a,b}\tens (G_p)_{b,c}"]
\mC V_\mC C (a,b) \tens \mC V_\mC C (b,c)
\rar["c_\mC V"]&\mC V_\mC C (a,c)\dar["(G_p)_{a,c}"]
\\
{[G_p(a),G_p(b)]}\tens {[G_p(b),G_p(c)]}
\rar["c_\mC V"]&{[G_p(a),G_p(c)]}
\end{diagram*}%
and
\begin{diagram*}
e\rar["u_a"]\drar["u_{G_p(a)}" swap]&\mC V_\mC
C(a,a)\dar["(G_p)_{a,a}"]\\
& {[G_p(a),G_p(a)]}\end{diagram*}%
commute in \mC V for all $a,b,c\in \mC C $ and every $p\in \bB Z$, which indeed
commute as $G_p$ are \mC V-functors.

\underline{Step 4.} Now given any chain map in $\Ch[\mC C, \mC V]$, we associate
a $\Ch(\mC V)$-natural transformation in $[\mC C,\Ch(\mC V)]$ in
the following canonical way.

Consider a chain map $g:G\to G'$ with $G,G'\in \Ch[\mC C, \mC V]$. We can
associate a $\Ch(\mC V)$-natural transformation $f:F\To F'$ with $F,F' \in [\mC C,\Ch(\mC
V)]$ being those functors associated to $G$ and $G'$ respectively.
By Lemma \ref{Vnat} we can determine $g$ at each component $n\in \bB Z$ by a
family of maps $g_n(a):G(a)\to G'(a) \in \Ch(\mathcal V)$ such that for all ${a,b\in \mC C}$
\begin{diagram*}[column
sep=huge] \mC V_\mC
C(a,b)\rar["  (G_n)_{a,b}"]\dar[" ({G'}_n)_{a,b}"
swap]&{ [G_n(a),G_n(b)]}\dar["{ [\id,g_n(b)]}"]\\
{ [G'_n(a),G'_n(b)]}\rar["{ [g_n(a),\id]}"]&
{ [G_n(a),G'_n(b)]}
\end{diagram*}
is commutative. Thus we set $f(a)_p:=g_p(a)$ and deduce that
\begin{diagram*}[column
sep=huge] \mC V_\mC
C(a,b)\rar["F_{a,b}"]\dar["{F}'_{a,b}"
swap]&{\prod_p[F(a)_p,F(b)_p]}\dar["{\prod_p[\id,f(b)_p]}"]\\
{\prod_p[F'(a)_p,F'(b)_p]}\rar["{\prod_p[f(a)_p,\id]}"]&
{\prod_p[F(a)_p,F'(b)_p]}\end{diagram*}%
is commutative. However, in order to say that $f$ is a map in $[\mC
C,\Ch(\mC V)]$, we must verify that all $f(a)$ belong to $\Ch(\mC V)$ 
to claim that \begin{diagram*}[column sep=huge]
\Ch(\mC V)_\mC C(a,b)\rar["F_{a,b}"]\dar["{F}'_{a,b}"
swap]&{\underline\Hom{(F(a),F(b))}}\dar["\underline\Hom{\left(\id_a,f\left(b\right)\right)}"]\\
\underline\Hom{(F'(a),F'(b))}\rar["\underline\Hom{\left(f\left(a\right),\id_b\right)}"]&
{\underline\Hom{(F(a),F'(b))}}\end{diagram*}%
commutes. But this reduces to the fact that
$f(a)_{p-1} \circ d^{F(a)}_p =(g_{p-1})(a) \circ d^G_p(a)=d^{G'}_{p-1}(a)\circ (g_p)(a)= d^{F'(a)}_{p-1}\circ
f(a)_{p}.$

\underline{Conclusion.} We have defined an association between objects and morphisms of the categories
$\Ch[\mC C, \mC V]$ and $[\mC C,\Ch(\mC V)],$ and further claim that it
is functorial and an isomorphism of categories. Functoriality can be seen from Lemma
\ref{Vnat} and the fact that composition of natural transformations is
determined by the composition on each component. Clearly, this is
an isomorphism of categories by the very construction, as required.
\end{proof}

Beke~\cite[3.13]{Be} and Hovey~\cite[2.2]{Hov} defined a proper cellular model structure on 
$\Ch(\cc A)$ for every Gro\-then\-dieck category $\cc A$,
where cofibrations are the monomorphisms, and weak equivalences the quasi-isomorphisms.
We also call it the {\it injective model structure}. Its fibrant objects are $\mathbf K$-injective complexes
in the sense of Spaltenstein~\cite{Spal}. In particular, $\Ch[\cc C,\cc V]$
has the injective model structure ($g:G\to G'$ in $\Ch[\cc C,\cc V]$ is a quasi-isomorphism if and only if 
$g(a):G(a)\to G'(a)$ is a quasi-isomorphism in $\Ch(\cc V)$ for all $a\in\cc C$).

However, it is hard to deal with the injective model structure for some particular computations. Instead
we want to transfer homotopy information from $\Ch(\cc V)$ to $\Ch[\cc C,\cc V]$ by using the identification
$\Ch[\cc C,\cc V]\cong[\cc C,\Ch(\cc V)]$ from the preceding theorem.

Suppose $\Ch(\cc V)$ possesses a weakly finitely generated monoidal model structure in the sense of~\cite{DRO}
in which weak equivalences are the quasi-isomorphisms. 
Following~\cite[Section~4]{DRO} a morphism $f$ in $[\cc C,\Ch(\cc V)]$ is a
{\it pointwise fibration\/} if $f(c)$ is a fibration in $\Ch(\cc V)$ for all $c\in\cc C$.
It is a {\it cofibration\/} if it has the left lifting property with respect to all pointwise acyclic fibrations.

We have the following application of the preceding theorem.

\begin{theorem} \label{modelstr}
Let $\cc V$ be a closed symmetric monoidal Grothendieck category and $\cc C$ be a
$\cc V$-category. Suppose $\Ch(\cc V)$ is a weakly finitely generated monoidal model structure
with respect to the tensor product $\odot$ of Theorem~\ref{ChVthm} and the monoid axiom holds for $\Ch(\cc V)$. Then 

\begin{enumerate}
\item $\Ch[\cc C,\cc V]$ with the classes of quasi-isomorphisms, cofibrations and pointwise fibrations 
defined above is a weakly finitely generated $\Ch(\cc V)$-model category. 

\item $\Ch[\cc C,\cc V]$ is a monoidal $\Ch(\cc V)$-model category provided that $\cc C$ is a symmetric mo\-noi\-dal $\Ch(\cc V)$-category.
In this case the tensor product of $ F,G\in \Ch[\cc C,\cc V]$ is given by
\begin{align*}
  F\odot G=\int^{(a,b) \in \cc C \otimes \cc C} F(a) \odot G(b) \odot\Ch(\cc V)_{\cc C}(a\otimes b,-).
\end{align*}
Here $\Ch(\cc V)_{\cc C}(a\otimes b,-)$ is regarded as a complex concentrated in zeroth degree.
The internal $\Hom$-object is defined as

\begin{align*}
\underline{\Hom}(F,G)(a)=\int_{b \in\cc C}\underline{\Hom}_{\Ch(\cc V)}(F(b),G(a\otimes b)).
\end{align*}

\item The pointwise model structure on $\Ch[\cc C,\cc V]$ is right proper if $\Ch(\cc V)$
is right proper, and left proper if $\Ch(\cc V)$ is strongly left proper in the sense of~\cite[4.6]{DRO}.
\end{enumerate}
\end{theorem}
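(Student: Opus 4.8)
The plan is to reduce the entire statement to the general theory of pointwise model structures on enriched functor categories developed by Dundas--R\"ondigs--{\O}stv{\ae}r in~\cite[Section~4]{DRO}, applied with the base monoidal category taken to be $\Ch(\cc V)$ rather than an abstract $\cc V$. The first step is to invoke Theorem~\ref{ChVthm}, which guarantees that $\Ch(\cc V)$ is a closed symmetric monoidal Grothendieck category; together with the hypothesis that it carries a weakly finitely generated monoidal model structure (with quasi-isomorphisms as weak equivalences) satisfying the monoid axiom, it meets exactly the standing assumptions of~\cite{DRO}. The second, and conceptually crucial, step is to replace $\Ch[\cc C,\cc V]$ by $[\cc C,\Ch(\cc V)]$ using the isomorphism of categories from Theorem~\ref{sqbracket}. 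Under this identification the quasi-isomorphisms and the pointwise fibrations of $\Ch[\cc C,\cc V]$ correspond precisely to the pointwise weak equivalences and pointwise fibrations of $[\cc C,\Ch(\cc V)]$, and the cofibrations are cut out by the same left lifting property on both sides. Hence part (1) becomes the assertion that $[\cc C,\Ch(\cc V)]$ admits the pointwise model structure, which is the content of the existence theorem of~\cite[Section~4]{DRO}; the weakly finitely generated condition and the $\Ch(\cc V)$-model category structure then transfer verbatim from the corresponding DRO results.

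For part (2), once $\cc C$ is a symmetric monoidal $\Ch(\cc V)$-category, the category $[\cc C,\Ch(\cc V)]$ carries a Day convolution symmetric monoidal structure whose tensor product is the coend
$$F\odot G=\int^{(a,b)\in\cc C\otimes\cc C}F(a)\odot G(b)\odot\Ch(\cc V)_{\cc C}(a\otimes b,-)$$
and whose internal Hom-object is the end displayed in the statement. That this gives a monoidal model structure, i.e.\ that the pushout--product and unit axioms hold, is again supplied by the monoidal enrichment theorem of~\cite[Section~4]{DRO}, whose hypotheses are satisfied precisely because $\Ch(\cc V)$ is weakly finitely generated and obeys the monoid axiom. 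I would then check that the explicit coend and end formulas above coincide with the abstract DRO descriptions by unwinding the coend/Yoneda calculus of Corollary~\ref{polezno}, so that nothing beyond a bookkeeping of indices is required.

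For part (3), the properness statements follow from the fact that in the pointwise model structure the weak equivalences, the fibrations, and the relevant pullbacks and pushouts are all computed objectwise on $\cc C$. Right properness is thus inherited directly from right properness of $\Ch(\cc V)$, and left properness follows from strong left properness of $\Ch(\cc V)$ in the sense of~\cite[4.6]{DRO} by the same objectwise argument, exactly as in the properness statement of~\cite[Section~4]{DRO}.

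The main obstacle I expect is not any single computation but the careful verification that the identification of Theorem~\ref{sqbracket} is compatible, as a closed symmetric monoidal functor, with all the structure imported from~\cite{DRO}: one must confirm that the tensor product $\odot$, the internal Hom $\underline\Hom$, and the sets of generating (acyclic) cofibrations match up on the two sides of $\Ch[\cc C,\cc V]\cong[\cc C,\Ch(\cc V)]$, and that the smallness witnesses underlying the ``weakly finitely generated'' hypothesis survive the translation. Once this compatibility is established, each clause of the theorem becomes a direct citation of the corresponding result in~\cite[Section~4]{DRO}.
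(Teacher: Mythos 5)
Your proposal is correct and follows essentially the same route as the paper: identify $\Ch[\cc C,\cc V]$ with $[\cc C,\Ch(\cc V)]$ via Theorem~\ref{sqbracket} and then cite the pointwise model structure results of~\cite[Section~4]{DRO} (specifically 4.2, 4.4, 4.8) together with Day's theorem for the monoidal structure in part~(2). The paper's own proof is just a terser version of exactly this argument.
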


\begin{proof}
In all statements we use Theorem~\ref{sqbracket}. The first statement follows from~\cite[4.2]{DRO}.
The second statement follows from~\cite[4.4]{DRO} and Day's Theorem~\cite{Day} for tensor products and internal Hom-objects.
Finally, the third statement follows from~\cite[4.8]{DRO}.
\end{proof}

\begin{example}\label{genmod}
Suppose $\cc V=\Mod R$ with $R$ a commutative ring and $\cc C=\modd R$, the category of finitely presented $R$-modules. 
Then $\cc C$ and $\Ch(\Mod R)$ (together with the projective model structure) satisfies the assumptions of Theorem~\ref{modelstr}
and all statements are then true for $\Ch[\modd R,\Mod R]$. Since $[\modd R,\Mod R]$ is isomorphic to the category 
of generalized modules $\cc C_R=(\modd R,\Ab)$ consisting of the additive functors from $\modd R$ to Abelian groups
(see~\cite[6.1]{AG}), Theorem~\ref{modelstr} recovers~\cite[6.3]{AG} stating similar model structures for $\Ch(\cc C_R)$.
\end{example}

\section{Compact generators for the derived category}

We consider the following situation when \mC V is a closed symmetric monoidal Grothendieck category
such that its derived category $\D(\mC V)$ is
compactly generated triangulated. We show that $\D[\mC C,\mC V]$ 
is also compactly generated in many reasonable cases
with $\mC C$ a small \mC V-category.

\begin{example}
(1) Given a commutative ring $R$, the category of $R$-modules is a closed symmetric 
monoidal Grothendieck
category. Moreover, the derived category of $R$-modules $\D(\Mod R)$ is
compactly generated triangulated. The compact generators are those complexes 
which are quasi-isomorphic to a bounded complex of finitely generated projective modules. Such
complexes are called \textit{perfect} complexes.

(2) Given a finite group $G$ and a field $k$, $(\Mod kG,\otimes_k,k)$ is a closed symmetric 
monoidal Grothendieck category. The derived category $\D(\Mod kG)$ is compactly 
generated triangulated. Its compact objects are given by the perfect complexes.

(3) The category of Nisnevich sheaves with transfers $NSwT/F$ over a field $F$ is
a closed symmetric monoidal Grothendieck category. The derived category $\D(NSwT/F)$ is compactly 
generated triangulated. Its compact generators are given by complexes $\mathbb Z_{tr}(X)[n]$
(the sheaf $\mathbb Z_{tr}(X)$ concenrated in the $n$th degree), where $X$ is an $F$-smooth algebraic variety
(see, e.g.,~\cite[p.~241]{GP}).
\end{example}

\begin{theorem}\label{mainthm}
Let ($\mC V, \tens,e$) be a closed symmetric monoidal Grothendieck category
such that the derived category of \mC V  is a compactly
generated triangulated category with compact generators $\{P_j\}_{j\in J}$. Further,
suppose we have a small \mC V-category \mC C and that any one of the following
conditions is satisfied:
\begin{itemize}
  \item[1.] each $P_j$ is $\K$-projective, in the sense of Spaltenstein
  \cite{Spal};
  \item[2.] for every $\K$-injective $Y\in \Ch [\mC C,\mC V]$ and every $c\in
  \mC C$, the complex $Y(c)\in \Ch(\mC V)$ is $\K$-injective;
  \item[3.] $\Ch(\mC V)$ has a model structure, with quasi-isomorphisms
  being weak equivalences, such that for every injective fibrant
  complex $Y\in \Ch [\mC C,\mC V]$ the complex $Y(c)$ is fibrant in $\Ch(\mC V)$.
\end{itemize}
Then $\D[\mC C,\mC V]$ is a compactly generated triangulated category with
compact generators $\{\cc V_{\cc C}(c,-)\oslash Q_j\mid c\in \mC C, j\in J\}$ where, if we
assume either (1) or (2), $Q_j = P_j$ or if we assume (3) then $Q_j = P^c_j$
a cofibrant replacement of $P_j$.
\end{theorem}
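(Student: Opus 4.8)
The plan is to establish compact generation of $\D[\mC C,\mC V]$ by exhibiting the proposed family $\{\cc V_{\cc C}(c,-)\oslash Q_j\}$ as a set of compact generators, working through the identification $\Ch[\mC C,\mC V]\cong[\mC C,\Ch(\mC V)]$ from Theorem~\ref{sqbracket}. The key structural input is the enriched adjunction between the evaluation functor $\ev_c\colon[\mC C,\Ch(\mC V)]\to\Ch(\mC V)$ and its left adjoint $c\mapsto\cc V_{\cc C}(c,-)\oslash(-)$, which is the closed-module structure of Lemma~\ref{closedmod} applied to the $\Ch(\mC V)$-enrichment. First I would record the basic Hom-isomorphism: for any $X\in\Ch(\mC V)$ and $Y\in\Ch[\mC C,\mC V]$, there is a natural identification
\begin{equation*}
\Hom_{\Ch[\mC C,\mC V]}\bigl(\cc V_{\cc C}(c,-)\oslash X,\,Y\bigr)\cong\Hom_{\Ch(\mC V)}\bigl(X,\,Y(c)\bigr),
\end{equation*}
coming from the enriched Yoneda Lemma~\ref{enryon} together with the $\oslash$–$\ev_c$ adjunction.

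Next I would pass to the derived level. The main point is to compute
\begin{equation*}
\Hom_{\D[\mC C,\mC V]}\bigl(\cc V_{\cc C}(c,-)\oslash Q_j,\,Y[n]\bigr)\cong\Hom_{\D(\mC V)}\bigl(Q_j,\,Y(c)[n]\bigr),
\end{equation*}
which reduces the generation and compactness questions for $\D[\mC C,\mC V]$ to the already-known properties of the $P_j$ in $\D(\mC V)$. To make the left adjoint descend to derived categories one must resolve $Y$ by a $\mathbf K$-injective complex (fibrant in the injective model structure): then the derived Hom on the left is computed by the ordinary Hom into the resolution, and the three hypotheses are exactly the conditions guaranteeing that the point-evaluation $Y(c)$ of a $\mathbf K$-injective object can be used to compute the derived Hom on the right. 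Under hypothesis~(2) the complex $Y(c)$ is itself $\mathbf K$-injective, so the right-hand Hom is already derived; under hypothesis~(1) one instead uses that $Q_j=P_j$ is $\mathbf K$-projective, so that the left adjoint is derived-exact without resolving $Y$; under hypothesis~(3) one replaces $P_j$ by a cofibrant replacement $P_j^c$ and uses the fibrancy of $Y(c)$ in the model structure to compute morphisms in the homotopy category. In all three cases the displayed isomorphism follows.

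With the isomorphism in hand the theorem is essentially formal. For generation I would argue that if $Y\in\D[\mC C,\mC V]$ satisfies $\Hom(\cc V_{\cc C}(c,-)\oslash Q_j,\,Y[n])=0$ for all $c,j,n$, then $\Hom_{\D(\mC V)}(P_j,\,Y(c)[n])=0$ for all $c,j,n$; since $\{P_j\}$ generates $\D(\mC V)$ this forces $Y(c)\simeq 0$ for every $c\in\mC C$, hence $Y\simeq 0$ because quasi-isomorphisms in $\Ch[\mC C,\mC V]$ are detected pointwise. For compactness I would use that the displayed isomorphism is natural and that $\ev_c$ commutes with coproducts (coproducts in $[\mC C,\Ch(\mC V)]$ are computed pointwise by Lemma~\ref{bicomplete}), so that compactness of each $P_j$ in $\D(\mC V)$ transports to compactness of $\cc V_{\cc C}(c,-)\oslash Q_j$ in $\D[\mC C,\mC V]$. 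The main obstacle, and the step requiring genuine care, is verifying that the $\oslash$–$\ev_c$ adjunction is compatible with the choice of resolutions on each side, i.e.\ that the left-derived functor of $\cc V_{\cc C}(c,-)\oslash(-)$ is genuinely adjoint to the right-derived (un-resolved) evaluation; this is precisely where the technical split into the three hypotheses is needed, and one should check that the coproduct-preservation used for compactness is not disturbed by the resolution process.
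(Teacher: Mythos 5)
Your proposal is correct and follows essentially the same route as the paper: both establish the natural isomorphism $\D[\mC C,\mC V]\bigl(\cc V_{\cc C}(c,-)\oslash Q_j,X\bigr)\cong\D(\mC V)\bigl(Q_j,X(c)\bigr)$ by taking a $\K$-injective (injective fibrant) resolution of $X$, applying the $\oslash$--$\ev_c$ adjunction together with the enriched Yoneda Lemma, and using the three hypotheses exactly as you describe to pass between homotopy and derived Hom-groups, after which generation (pointwise detection of acyclicity) and compactness (pointwise coproducts) are formal. The only minor difference is that in case (1) you observe one can avoid resolving $X$ altogether since $\cc V_{\cc C}(c,-)\oslash P_j$ is itself $\K$-projective, whereas the paper resolves $X$ uniformly in all three cases; both are valid.
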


\begin{proof}
We write $(c,-)$ to denote $\cc V_{\cc C}(c,-)$.
Assuming any of the three conditions, suppose $c \in \mC C, X\in
\Ch[\mC C,\mC V]$ and take any $Q_j.$ Clearly, if we denote the injective
fibrant replacement of $X$ by $X_f$ (recall that every object is cofibrant in
the injective model structure see \cite{Be,Hov}), then $$ \D[\mC C,\mC V]
\left( \left( c,-\right)\oslash Q_j, X \right)\cong \D[\mC C,\mC V] \left( \left(  c,-\right)\oslash Q_j, X_f \right)
\cong\K[\mC C,\mC V] \left( \left(  c,-\right)\oslash Q_j, X_f \right).$$
By the tensor hom adjunction in \mC V and the Yoneda lemma, we have
$$\K[\mC C,\mC V] \left( \left(  c,-\right)\oslash Q_j, X_f \right)\cong
\K(\mC V) \left( Q_j, X_f\left(c\right) \right).$$
Next by assuming either (1) or (2), and the definition of $\K$-projective
($\K$-injective respectively) complexes we see $$\K(\mC V) \left( Q_j,
X_f\left(c\right) \right)\cong \D(\mC V) \left( Q_j, X_f(c) \right).$$
If, however, we assume (3) then $Q_j$ is cofibrant, $X_f(c)$ is fibrant in
$\Ch(\mC V)$ and this natural isomorphism holds also.

Since the arrow $X(c)\to X_f(c)$ is a quasi-isomorphism, then
   $$\D(\mC V) \left( Q_j, X_f(c) \right)\cong \D(\mC V) \left( Q_j, X(c) \right).$$
Hence we have established a natural isomorphism
$$\D[\mC C,\mC V] \left( \left(  c,-\right)\oslash Q_j, X \right)
\cong \D(\mC V) \left( Q_j, X(c) \right).$$ With this isomorphism in hand the
family $\{(c,-)\oslash Q_j\mid c\in \mC C, j\in J\}$ is a collection of compact
generators can be verified as follows.

First, we verify that $\{(c,-)\oslash Q_j\mid c\in \mC C,
j\in J\}$ is a family of generators for $\D{[\mC C,\mC V]}$. Precisely, if $
\D{[\mC C,\mC V]}((c,-)\oslash Q_j, A)= 0 \text{ for all } j\in J \text{ and } c \in \mC C$, then we must show that
$A \cong 0 $. Assume $\D{[\mC C,\mC V]}((c,-)\oslash Q_j, A) = 0$ thus
$\D(\mC V)( Q_j, A(c))=0$
which implies $A(c) \cong 0, \text{ for all } c\in\mC C.$ We use the fact that
$\{Q_j\}_J$ is a family of generators in $\D(\mC V)$.
Therefore $A$ is pointwise acyclic and hence is acyclic itself, then $A \cong
0$ in $\D[\mC C,\mC V]$ as required.

We now verify compactness, precisely we must demonstrate the following natural
isomorphism $$ \D{[\mC C,\mC V]}((c,-)\oslash Q_j, \bigoplus_i B_i) \cong
		\bigoplus_i \D{[\mC C,\mC V]}((c,-)\oslash Q_j, B_i).  $$
We have the following natural isomorphisms
	\begin{align*}
		\D{[\mC C,\mC V]}((c,-)\oslash Q_j, \bigoplus_i B_i)
		&\cong \D({\mC {V}})(Q_j,\bigoplus_i B_i(c)) \\
		&\cong \bigoplus_i \D(\mC V)(Q_j,B_i(c))\\
		&\cong \bigoplus_i \D{[\mC C,\mC V]}((c,-)\oslash Q_j,B_i).
	\end{align*}
Here we use the fact that direct sums commute with evaluation and our
assumption about the compactness of $Q_j.$ Hence, $\{(c,-)\oslash Q_j\mid c\in
\mC C, j\in J\}$ is indeed a family of compact generators for $\D[\mC C,\mC V].$
\end{proof}

\begin{remark}
Though conditions (1)-(2) of the preceding theorem have nothing to do with model structures,
one should stress that condition (1) normally occurs whenever
$\Ch(\cc V)$ has a projective model structure with generating (trivial) cofibrations having
finitely presented domains and codomains. Condition~(2) is typical for the injective model structure
on $\Ch(\cc V)$, which always exists by~\cite{Be, Hov}, and when $\cc C=\{*\}$, a 
singleton with $R:=\cc V_{\cc C}(*,*)$ a flat ring object of $\cc V$
(i.e. the functor $R\otimes-$ is exact on $\cc V$). Finally, condition~(3) is most common in
practice. It often assumes intermediate model structures on $\Ch(\cc V)$, i.e. model structures which are
between the projective and injective model structures. This situation is often recovered from Theorem~\ref{modelstr}.
\end{remark}

We conclude the paper with the following observation.
Given a closed symmetric monoidal Grothendieck category \mC V and a small 
symmetric monoidal \mC V-category \mC C, then $[\mC C,\mC V]$ is also a closed
symmetric monoidal Grothendieck \mC V-category by~\cite[4.2]{AG}.
If $\D(\mC V)$ has $\K$-projective compact generators $\{P_j\}_J$ then the proof of
Theorem~\ref{mainthm} shows that $\D[\mC C,\mC V]$ has a family of
$\K$-projective compact generators given by $\{(c,-)\oslash P_j\mid c\in \mC C, j\in J\}$.
Thus we are able to iterate this process as follows. If we set $\mC V_1 = [\mC
C,\mC V]$ and are given a small symmetric monoidal $\mC V_1$-category $\mC C_1$,
we can conclude that $\D[\mC C_1,\mC V_1]$ is also compactly generated
having $\K$-projective compact generators. We can
then set $\mC V_2 = [\mC C_1,\mC V_1]$ and repeat this procedure as many
times as necessary to generate as many examples as we desire.

For instance, starting with $\cc V=\Mod R$, where $R$ is commutative and $\cc C=\modd R$
(see Example~\ref{genmod}), set $\cc V_1:=[\modd R,\Mod R]\cong\cc C_R$ and 
$\cc C_1:=\fp(\cc V_1)$, where $\fp(\cc V_1)$ consists of finitely presented objects of $\cc V_1$.
Then $\mC V_2 = [\fp(\cc V_1),\mC V_1]$ is a closed symmetric monoidal locally finitely 
presented Grothendieck category. Its finitely presented generators are given by
$\cc V_1(a,-)\oslash c$, where $a,c\in\cc C_1$. We use here natural isomorphisms
   \begin{gather*}
    \Hom_{\cc V_2}(\cc V_1(a,-)\oslash c,\lp{}_I X_i)\cong\Hom_{\cc V_1}(c,\cc V_1(\cc V_1(a,-),\lp{}_I X_i))\cong\\
    \Hom_{\cc V_1}(c,\lp{}_I X_i(a))\cong\lp{}_I\Hom_{\cc V_1}(c,X_i(a))\cong\lp{}_I\Hom_{\cc V_2}(\cc V_1(a,-)\oslash c,X_i)
   \end{gather*}
and the fact that $\cc C_1$ is closed under tensor product in $\cc V_1$.
Moreover, $\D(\cc V_2)=\D[\mC C_1,\mC V_1]$ is also compactly generated
having $\K$-projective compact generators. Iterating this, we can define a closed symmetric 
monoidal locally finitely presented Grothendieck category $\mC V_n = [\fp(\cc V_{n-1}),\mC V_{n-1}]$ 
for all $n>1$. And then $\D(\cc V_n)=\D[\fp(\cc V_{n-1}),\mC V_{n-1}]$ is compactly generated
having $\K$-projective compact generators.

\bibliographystyle{amsalpha}

\begin{thebibliography}{AAAA}

\bibitem[AG]{AG}H. Al Hwaeer, G. Garkusha, \emph{Grothendieck categories of enriched functors},
    J. Algebra \textbf{450} (2016), 204-241.

\bibitem[Be]{Be} T. Beke, \emph{Sheafifiable homotopy model categories}, Math. Proc. Cambridge Phil. Soc. \textbf{129} (2000), 447-475.

\bibitem[BK]{BK} A. I. Bondal, M. M. Kapranov, \emph{Enhanced triangulated categories}, Mat. Sb. \textbf{181}(5) (1990),
669-683. English transl. in Math. USSR-Sb. \textbf{70}(1), 93-107.

\bibitem[Bor2]{Borceux}	F. Borceux, \emph{Handbook of Categorical Algebra 2}, Cambridge University Press,  Cambridge, 1994.

\bibitem[Day]{Day} B. Day, \emph{On closed categories of functors}, In Reports of the Midwest
Category Seminar, IV, Springer, Berlin, 1970, pp. 1-38.

\bibitem[DRO1]{DRO} B. I. Dundas, O. R\"ondigs, P. A. {\O}stv{\ae}r,
 \emph{Enriched functors and stable homotopy theory}, Doc. Math. \textbf{8} (2003), 409-488.
 
\bibitem[DRO2]{DRO1} B.~Dundas, O. R\"ondigs, P. A. {\O}stv{\ae}r, \emph{Motivic functors}, 
Doc. Math. \textbf{8} (2003), 489-525.

\bibitem[GP1]{GP} G. Garkusha, I. Panin, \emph{K-motives of algebraic varieties}, Homology, Homotopy Appl. \textbf{14}(2) (2012), 211-264.

\bibitem[GP2]{GPa2} {G. Garkusha, I. Panin}, \emph{The triangulated category of K-motives $DK_-^{eff}(k)$}, J. K-theory
\textbf{14}(1) (2014), 103-137.

\bibitem[GP3]{GPa1} { G. Garkusha, I. Panin}, \emph{On the motivic spectral sequence}, J. Inst. Math. Jussieu \textbf{17}(1) (2018), 137-170.

\bibitem[Hov]{Hov}	 M. Hovey, \emph{Model category structures on chain complexes of sheaves}, Trans. Amer. Math. Soc.
	\textbf{353}(6) (2001), 2441-2457.
	
\bibitem[HPS]{HPS} M. Hovey, J. H. Palmieri, N. P. Strickland, \emph{Axiomatic
stable homotopy theory}, Mem. Amer. Math. Soc. \textbf{128} (1997), no. 610.

\bibitem[Il]{Illusie} L. Illusie, \emph{Existence de r\'e solutions globales}, Sem. Geom. algebrique Bois Marie 1966/67, SGA VI, Lecture Notes
in Mathematics 225, 1971, pp.~160-221.

\bibitem[Kel]{KellerICM} B. Keller, \emph{On differential graded categories}, International
Congress of Mathematicians, Vol. II, Eur. Math. Soc., Z\"urich,
2006, pp.~151-190.

\bibitem[R]{Riehl} E. Riehl,  \emph{Categorical homotopy theory}, New Mathematical Monographs 24, Cambridge University Press,
Cambridge, 2014.

\bibitem[Sp]{Spal} N. Spaltenstein,\emph{Resolutions of unbounded complexes}, Compos. Math. \textbf{65}(2) (1988), 121-154.

\bibitem[SV]{SV} A. Suslin, V. Voevodsky, \emph{Bloch--Kato conjecture and motivic cohomology with
finite coefficients}, The Arithmetic and Geometry of Algebraic Cycles
(Banff, AB, 1998), NATO Sci. Ser. C Math. Phys. Sci., vol. 548, Kluwer Acad. Publ., Dordrecht, 2000, pp. 117-189.

\end{thebibliography}

\end{document}